\newtheorem{theorem}{Theorem}[section]
\newtheorem{proposition}[theorem]{Proposition}
\newtheorem{corollary}[theorem]{Corollary}
\newtheorem{example}[theorem]{Example}
\begin{document}

\afterpage{\rhead[]{\thepage} \chead[\small W.A. Dudek and R.A.R Monzo]{\small Biquasigroups linear over a group} \lhead[\thepage]{}}                  

\begin{center}
\vspace*{2pt}
{\Large \textbf{Biquasigroups linear over a group}}\\[26pt]
{\large \textsf{\emph{Wieslaw A. Dudek \ and \ Robert A. R. Monzo}}}  
\\[26pt]
\end{center}
\textbf{Abstract.} {\footnotesize We determine the structure of biquasigroups $(Q,\circ,*)$ satisfying variations of Polonijo’s Ward double quasigroup identity $(x\circ z)*(y\circ z)=x*y$, including those that are linear over a  group.           
}
\footnote{\textsf{2010 Mathematics Subject Classification:} 20M15, 20N02}
\footnote{\textsf{Keywords:} Biquasigroups, linear quasigroups, Ward quasigroups.}


\section*{\centerline{1. Introduction}}\setcounter{section}{1}
J.M. Cardoso and C.P. da Silva, inspired by Ward's paper \cite{Ward} on postulating the inverse operations in groups, introduced in \cite{Car} the notion of {\em Ward quasigroups} as  quasigroups $(Q,\circ)$ containing an element $e$ such that $x\circ x=e$ for all $x\in Q$, and satisfying the identity $(x\circ y)\circ z=x\circ(z\circ(e\circ y))$. Polonijo \cite{Pol2} proved that these two conditions can be replaced by the identity:
\begin{eqnarray}\label{e1}
&&(x\circ z)\circ(y\circ z)=x\circ y.
\end{eqnarray}  

In \cite{Car} it is proved that if $(Q,\circ)$ is a Ward quasigroup, then $(Q,\cdot)$, where $x\cdot y=x\circ (e\circ y)$, is a group in which $e=x\circ x$ and $x^{-1}=e\circ x$ for all $x\in Q$. Also, $x\circ e=x$, $e\circ (e\circ x)=x$ and $e\circ (x\circ y)=y\circ x$. 
Conversely, if $(Q,\cdot)$ is a group, then $Q$ with the operation $x\circ y=x\cdot y^{-1}$ is a Ward quasigroup (cf.\ \cite{Ward}). Other characterizations of Ward quasigroups can be found in \cite{Ch} and \cite{Sil}, some applications in \cite{JV}. Note that the Ward quasigroups corresponding to commutative groups sometimes are called {\em subtractive quasigroups} (cf.\ \cite{Mor} and \cite{Whi}). A Ward quasigroup $(Q,\circ)$ is subtractive if and only if it is medial (that is, it satisfies the identity $(x\circ y)\circ(z\circ w)=(x\circ z)\circ (y\circ w))$ if and only if it is left modular (that is, it satisfies the identity $x\circ (y\circ z)=z\circ (y\circ x)$) (cf.\ Lemma 2.4, \cite{DM1}).

A {\em biquasigroup}, i.e. an algebra of the form $(Q,\circ,*)$ where $(Q,\circ)$ and $(Q,*)$ are quasigroups, is called a {\em Ward double quasigroup} if it satisfies the identity
\begin{eqnarray}\label{e2}
&&(x\circ z)*(y\circ z)=x*y.
\end{eqnarray}  

Obviously each Ward quasigroup $(Q,\circ)$ can be considered as a Ward double quasigroup of the form $(Q,\circ,\circ)$.
Ward double quasigroups have a similar characterization as Ward quasigroups. 

\begin{theorem}\label{T11} {\rm (cf.\ \cite{Pol1})} A biquasigroup $(Q,\circ,*)$ is a Ward double quasigroup if and only if  there is a group $(Q,+)$ and bijections $\alpha,\beta$ on $Q$ such that $x\circ y=x -\beta y$ and $x*y=\alpha(x-y)$. 
\end{theorem}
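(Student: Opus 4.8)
The plan is to prove the two implications separately; the backward (``if'') direction is routine, and I expect the construction of the group in the forward (``only if'') direction to carry all the difficulty.

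For the ``if'' direction, assume a group $(Q,+)$ (writing $x-y$ for $x+(-y)$, with no commutativity assumed) and bijections $\alpha,\beta$ with $x\circ y=x-\beta y$ and $x*y=\alpha(x-y)$. First I would check that $\circ$ and $*$ are quasigroups by solving the four translation equations $a\circ y=b$, $x\circ a=b$, $a*y=b$, $x*a=b$ uniquely, using that $(Q,+)$ is a group and that $\alpha,\beta$ are bijections. Then I would verify $(\ref{e2})$ by the computation $(x\circ z)*(y\circ z)=\alpha\bigl((x-\beta z)-(y-\beta z)\bigr)$ and noting that the middle terms cancel even in a non-abelian group, since $-(y-\beta z)=\beta z-y$, so $(x-\beta z)-(y-\beta z)=x-y$ and the whole expression equals $\alpha(x-y)=x*y$.

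For the ``only if'' direction I would read $(\ref{e2})$ as the statement that every right translation $R_z$ of $\circ$, namely $R_z(x)=x\circ z$, preserves the quantity $d(x,y):=x*y$; that is $d(R_zx,R_zy)=d(x,y)$, so each $R_z$ is a ``$*$-isometry.'' Since $(Q,\circ)$ is a quasigroup these $R_z$ form a sharply transitive family of permutations, and for any fixed base point $0$ the map $z\mapsto R_z(0)=0\circ z$ is a bijection of $Q$. The key lemma I would establish is that a $*$-isometry $\phi$ is completely determined by the single value $\phi(0)$: setting the second argument to $0$ in $(\phi a)*(\phi b)=a*b$ gives $(\phi a)*\phi(0)=a*0$, and since $(Q,*)$ is a quasigroup this determines $\phi a$ as the unique solution of $u*\phi(0)=a*0$. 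In particular any $*$-isometry fixing $0$ is the identity.

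From this lemma the group appears quickly. Composites and inverses of $*$-isometries are again $*$-isometries, so each $R_zR_w$ and each $R_z^{-1}$, having a prescribed value at $0$, must coincide with the unique right translation sharing that value; hence $G=\{R_z\}$ is closed under composition and inversion, i.e.\ a subgroup of $\mathrm{Sym}(Q)$ acting regularly. Transporting this action to $Q$ via $0$, concretely $a+b:=a\circ\widehat b$ where $\widehat b$ is defined by $0\circ\widehat b=b$, yields a group $(Q,+)$ with identity $0$: associativity is exactly the relation $\sigma_c\sigma_b=\sigma_{\sigma_c(b)}$ among translations, which follows from closure together with the uniqueness lemma, while the element $r$ solving $0\circ r=0$ is forced to satisfy $x\circ r=x$ by putting $y=0,z=r$ in $(\ref{e2})$. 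Finally I would exhibit the bijections $\alpha(x):=x*0$ and $\beta(y):=-(0\circ y)$. Then $x\circ y=x+(0\circ y)=x-\beta y$ is immediate from the definition of $+$, and for $*$ I would choose $z$ with $y\circ z=0$, so that $(\ref{e2})$ gives $x*y=(x\circ z)*0=\alpha(x\circ z)$, while $x\circ z=x-y$ since $0\circ z=-y$; hence $x*y=\alpha(x-y)$. The main obstacle throughout is showing that $(Q,+)$ is genuinely a group and not merely a loop, and this is precisely what the lemma that a $*$-isometry is determined by its value at $0$ delivers.
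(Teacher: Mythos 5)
Your proof is correct and complete. Note, however, that the paper itself offers no proof of Theorem \ref{T11}: it is quoted from Polonijo's paper \cite{Pol1} as a known result, so there is no internal argument to compare yours against. On its own terms your argument is sound: the ``if'' direction is the routine verification you describe (the cancellation $(x-\beta z)-(y-\beta z)=x-y$ indeed needs no commutativity), and the ``only if'' direction correctly isolates the key point, namely that each right translation $R_z$ of $(Q,\circ)$ is a $*$-isometry and that a $*$-isometry is pinned down by its value at a base point (by right cancellation in the quasigroup $(Q,*)$ applied to $(\phi a)*(\phi 0)=a*0$). This turns $\{R_z\}$ into a sharply transitive set of permutations closed under composition and inversion, hence a regular permutation group, and transporting it to $Q$ via the orbit map at $0$ gives the group $(Q,+)$; your final identifications $\beta y=-(0\circ y)$ and $\alpha x=x*0$, together with the choice of $z$ with $y\circ z=0$ so that $0\circ z=-y$, correctly recover $x\circ y=x-\beta y$ and $x*y=\alpha(x-y)$. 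This is essentially the same mechanism by which Ward quasigroups themselves are shown to come from groups, so it is very likely close in spirit to Polonijo's original proof, but that cannot be checked from the present paper.
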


Note that Ward double quasigroups are distinct from the double Ward quasigroups considered by Fiala (cf.\ \cite{Fia}).

\medskip
Let us consider the identity \eqref{e2}. Keeping the variables $x,y$ and $z$ the same and varying only the quasigroup operations $\circ$ and $*$, there are 
sixteen possible identities. Eight of these have reversible versions obtained by replacing the operation $\circ$  with the operation $*$ and, simultaneously, replacing the operation $*$ with the operation $\circ$.

For example, the identity $(x\circ z)*(y*z)=x\circ y$ has the reversible version $(x*z)\circ (y\circ z)=x*y$. So, if we are to consider all possible versions of Theorem \ref{T11}, we need to explore the following identities:
\begin{eqnarray}
\label{e3}
&&(x\circ z)\circ (y\circ z)=x\circ y,\\
\label{e4}
&&(x\circ z)\circ (y\circ z)=x*y,\\
\label{e5}
&&(x\circ z)\circ(y* z)=x\circ y,\\
\label{e6}
&&(x\circ z)*(y\circ z)=x\circ y,\\
\label{e7}
&&(x\circ z)\circ (y* z)=x* y,\\
\label{e8}
&&(x\circ z)* (y* z)=x\circ y,\\
\label{e9}
&&(x\circ z)* (y* z)=x*y.
\end{eqnarray}  
  
The biquasigroup $(Q,\circ,\circ)$ satisfies identity \eqref{e3} if and only if $(Q,\circ)$ is a Ward quasigroup. 
Our interest is in finding non-trivial models of the other six identities, where `non-trivial' means that the set $Q$ has more than one element. In particular, since Ward quasigroups are unipotent, we will be interested in biquasigroups $(Q,\circ,*)$ where $(Q,\circ)$ or $(Q,*)$ is unipotent, both are unipotent or when one or both are Ward quasigroups.

Note that a biquasigroup $(Q,\circ,*)$, where $(Q,*)$ is a commutative group and $x\circ y=x*y^{-1}$ satisfies identities \eqref{e1} through \eqref{e9} if and only if $(Q,*)$ is a Boolean group.

\section*{\centerline{2. Main Results}}\setcounter{section}{2}\setcounter{theorem}{0}

We will now characterize the biquasigroups satisfying the identities \eqref{e2} to \eqref{e9}.
First we will describe their general properties then we will characterize biquasigroups linear over a group and satisfying identities \eqref{e2} to \eqref{e9}.

\medskip\noindent
{\bf 1.} 
Recall that a quasigroup $(Q,\cdot)$ is {\em linear} over a group (cf. \cite{Scerb}) if there exists a group $(Q,+)$, its automorphisms $\varphi,\psi$ and  $a\in Q$ such that $x\cdot y=\varphi x+a+\psi y$ for all $x,y\in Q$. Consequently, a biquasigroup $(Q,\circ,*)$ will be called {\em linear} over a group if both its quasigroups $(Q,\circ)$ and $(Q,*)$ are linear over the same group, i.e. if there is a group $(Q,+)$, its automorphisms $\varphi,\psi,\alpha,\beta$ and elements $a,b\in Q$ such that
$$
x\circ y=\varphi x+a+\psi y \qquad {\rm and} \qquad x*y=\alpha x+b+\beta y.
$$

According to the Toyoda Theorem (cf.\ \cite{Scerb}), a quasigroup $(Q,\cdot)$ is medial if and only if it is linear over a commutative group with commuting automorphisms $\varphi,\psi$. 
In an analogous way we can shows that a quasigroup $(Q,\cdot)$ is {\em paramedial}, that is it satisfies the identity $(x\cdot y)\cdot(z\cdot u)=(u\cdot y)\cdot (z\cdot x)$ if and only if it is linear over a commutative group with automorphisms $\varphi$, $\psi$ such that $\varphi^2=\psi^2$. Based on these facts we say that a biquasigroup $(Q,\circ,*)$ is {\em medial} ({\em paramedial}) if both its quasigroups $(Q,\circ)$ and $(Q,*)$ are medial (paramedial)  and linear over the same commutative group. 

A biquasigroup $(Q,\circ,*)$ is {\em unipotent} if there is $q\in Q$ such that $x\circ x=q=x*x$ for and all $x\in Q$. If both quasigroups $(Q,\circ)$ and $(Q,*)$ are idempotent then we say that $(Q,\circ,*)$ is an {\em idempotent biquasigroup}. 

\bigskip\noindent {\bf 2.} 
We will start with biquasigroups satisfying the identity \eqref{e2}. 

\smallskip
A general characterization of such biquasigroups is given by Theorem \ref{T11}. 
Now we describe a biquasigroup linear over a group $(Q,+)$ and satisfying identity \eqref{e2}.

 From \eqref{e2} for $x=y=z=0$ we obtain $\alpha a+b+\beta a=b$, This together with \eqref{e2} implies $\varphi=\varepsilon$ (the identity map). Thus 
$$
\alpha x+\alpha a+\alpha\psi z+b+\beta y+\beta a+\beta\psi z=\alpha x+b+\beta y.
$$ 
This for $z=0$ gives  
$$
\alpha a+b+\beta y +\beta a=b+\beta y=\alpha a+b+\beta a+\beta y.
$$ 
So $\beta y+\beta a=\beta a+\beta y$, i.e. $a$ is in the center $Z(Q,+)$ of the group $(Q,+)$. Thus using \eqref{e2} and the above facts we obtain 
$\alpha\psi z+b+\beta y+\beta\psi z=b+\beta y$. Hence $\alpha v+u+\beta v=u$ for all $u,v\in Q$.
Thus $\beta=-\alpha$ and consequently $\alpha v+u=u+\alpha v$ for all $u,v\in Q$, which means that $(Q,+)$ is a commutative group.

\smallskip
In this way we have proved the “only if” part of the following Theorem. The second part is trivial.

\begin{theorem}\label{T22}
A biquasigroup $(Q,\circ,*)$ linear over a group $(Q,+)$ is a Ward double quasigroup $($that is, it satisfies \eqref{e2}$)$ if and only if $(Q,+)$ is a commutative group, 
$x\circ y=x+\psi y+a$ and $x*y=\alpha x-\alpha y+b$.
\end{theorem}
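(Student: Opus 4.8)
The plan is to treat the two directions separately and to concentrate all the work on the forward (``only if'') implication, since the converse is a one-line verification: assuming $(Q,+)$ commutative, $x\circ y=x+\psi y+a$ and $x*y=\alpha x-\alpha y+b$, I would simply substitute $x\circ z=x+\psi z+a$ and $y\circ z=y+\psi z+a$ into the left side of \eqref{e2}, apply $\alpha$, and use commutativity to cancel the $\alpha\psi z$ and $\alpha a$ contributions, leaving $\alpha x-\alpha y+b=x*y$. For the forward direction I would start from the linear forms $x\circ y=\varphi x+a+\psi y$ and $x*y=\alpha x+b+\beta y$ and insert them into \eqref{e2} to obtain the master identity
$$
\alpha\varphi x+\alpha a+\alpha\psi z+b+\beta\varphi y+\beta a+\beta\psi z=\alpha x+b+\beta y,
$$
which must hold for all $x,y,z\in Q$, and then extract the required constraints by a carefully ordered sequence of specializations and cancellations.

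Concretely, I would first set $x=y=z=0$ (using $\varphi0=\psi0=\alpha0=\beta0=0$) to get the scalar relation $\alpha a+b+\beta a=b$. Feeding this back into the master identity specialized at $y=z=0$ collapses the central block $\alpha a+b+\beta a$ to $b$ and leaves $\alpha\varphi x+b=\alpha x+b$; right-cancelling $b$ and applying the bijection $\alpha^{-1}$ yields $\varphi=\varepsilon$. With $\varphi=\varepsilon$ in hand I would set $z=0$ and cancel $\alpha x$ on the left to reduce the identity to $\alpha a+b+\beta y+\beta a=b+\beta y$; combining this with $\alpha a+b+\beta a=b$ and cancelling gives $\beta y+\beta a=\beta a+\beta y$ for all $y$, so $\beta a$ — and hence $a$, the center being characteristic — lies in $Z(Q,+)$. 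Finally I would return to the full identity (now with $\varphi=\varepsilon$ and $a$ central), use centrality of $\alpha a$ and $\beta a$ to regroup and eliminate $\alpha a+b+\beta a=b$, and arrive at $\alpha\psi z+b+\beta y+\beta\psi z=b+\beta y$. Writing $v=\psi z$ and $u=b+\beta y$ and invoking surjectivity of $\psi$ and $\beta$ to let $u,v$ range over all of $Q$, this becomes $\alpha v+u+\beta v=u$; taking $u=0$ forces $\beta=-\alpha$, and the residual relation $\alpha v+u=u+\alpha v$ together with surjectivity of $\alpha$ shows every element is central, i.e. $(Q,+)$ is commutative. Assembling $\varphi=\varepsilon$, $\beta=-\alpha$ and commutativity gives exactly $x\circ y=x+\psi y+a$ and $x*y=\alpha x-\alpha y+b$.

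The step I expect to be the main obstacle is the bookkeeping of non-commutativity before commutativity has actually been proved. Throughout the early part of the argument I can only use the genuine group cancellation laws and the centrality facts as they are established (first $a$, then, at the very end, everything), so I must be disciplined about the left-to-right order of the summands and never silently permute terms that are not yet known to commute. The decisive leverage comes from the automorphisms being bijections: it lets me promote identities that a priori hold only on the images $\psi(Q)$, $\beta(Q)$ to identities valid for all elements of $Q$, which is precisely what turns the ``pointwise'' relation $\alpha v+u+\beta v=u$ into the structural conclusions $\beta=-\alpha$ and commutativity of $(Q,+)$.
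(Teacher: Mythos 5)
Your proposal is correct and follows essentially the same route as the paper's proof: the same specializations ($x=y=z=0$, then $z=0$, then the full identity), the same intermediate facts ($\alpha a+b+\beta a=b$, $\varphi=\varepsilon$, $a\in Z(Q,+)$, and the reduction to $\alpha v+u+\beta v=u$), and the same conclusions $\beta=-\alpha$ and commutativity of $(Q,+)$. Your version merely spells out a few cancellation and surjectivity steps that the paper leaves implicit.
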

Obviously such a biquasigroup is medial. The quasigroup $(Q,\circ)$ has a right neutral element and the quasigroup $(Q,*)$ is unipotent. Moreover, a biquasigroup  $(Q,\circ,*)$ satisfying \eqref{e2} is paramedial if and only if $\psi^2=\varepsilon$.

\medskip 
\medskip\noindent {\bf 3.}
Now consider biquasigroups satisfying the identity \eqref{e3}. 

\smallskip
Since this identity contains only one operation, it is enough to examine the quasigroup $(Q,\circ)$. 
Quasigroups satisfying \eqref{e3} were characterized at the beginning of this paper. If a quasigroup $(Q,\circ)$ linear over a group $(Q,+)$ satisfies \eqref{e3}, then $\varphi=\varepsilon$ and $a+\psi a=0$.
So \eqref{e3} for $y=0$, can be reduced to $\psi z+\psi^2 z=0$. This means that $\psi z=-z$ and $(Q,+)$ is a commutative group. Consequently $x\circ y =x-y+a$. 
 
\begin{theorem}\label{T23} 
A quasigroup $(Q,\circ)$ linear over a group $(Q,+)$ satisfies \eqref{e3} if and only if $(Q,+)$ is a commutative group and $x\circ y=x-y+a$ for some fixed $a\in Q$.
\end{theorem}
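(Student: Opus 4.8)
The plan is to establish the nontrivial (``only if'') direction by substituting the linear form of $\circ$ into \eqref{e3} and stripping off the data one piece at a time, since the converse is a one-line verification. First I would dispose of the ``if'' part: assuming $(Q,+)$ commutative and $x\circ y=x-y+a$, a direct expansion gives
$$
(x\circ z)\circ(y\circ z)=(x-z+a)-(y-z+a)+a=x-y+a=x\circ y,
$$
so \eqref{e3} holds.

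For the forward direction, write $x\circ y=\varphi x+a+\psi y$ with $\varphi,\psi$ automorphisms of $(Q,+)$, and expand both sides of \eqref{e3}. The left side becomes
$$
\varphi^2 x+\varphi a+\varphi\psi z+a+\psi\varphi y+\psi a+\psi^2 z,
$$
which must equal $\varphi x+a+\psi y$ for all $x,y,z$. The idea is to kill variables so as to isolate each automorphism. Putting $x=y=z=0$ (the automorphisms fix $0$) gives $\varphi a+a+\psi a=a$. Next, putting $y=z=0$ and feeding in this relation collapses the equation to $\varphi^2 x+a=\varphi x+a$; cancelling $a$ on the right and using that $\varphi$ is bijective yields $\varphi=\varepsilon$.

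With $\varphi=\varepsilon$ the relation from $x=y=z=0$ reduces to $a+\psi a=0$. Substituting $\varphi=\varepsilon$ throughout and then setting $y=0$ cancels the leading $x+a$ and, via $a+\psi a=0$, leaves $\psi z+\psi^2 z=0$ for every $z$. Writing $w=\psi z$ and using surjectivity of $\psi$, this reads $\psi w=-w$ for all $w$, i.e.\ $\psi$ is inversion. Substituting $\psi y=-y$ into $x\circ y=x+a+\psi y$ then reads off $x\circ y=x-y+a$.

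The step I expect to be the real (conceptual) obstacle, rather than bookkeeping, is producing commutativity of $(Q,+)$, which is nowhere assumed: every cancellation above must be carried out from the correct side, since the group is a priori non-abelian. The lever is exactly that $\psi$ has been forced to equal inversion; an automorphism can coincide with $x\mapsto-x$ only when $-(x+y)=(-x)+(-y)=-x-y$, which against $-(x+y)=-y-x$ forces $x+y=y+x$. So commutativity is a consequence of the computation, and this is the point needing care. (Alternatively, since \eqref{e3} is just \eqref{e2} with $*$ replaced by $\circ$, one could specialize Theorem \ref{T22} to $\alpha=\varphi$, $\beta=\psi$, $b=a$ and match its two formulas for $\circ$; but the direct argument above is self-contained.)
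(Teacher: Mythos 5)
Your proof is correct and follows essentially the same route as the paper: substitute the linear form into \eqref{e3}, set variables to $0$ to obtain $\varphi=\varepsilon$ and $a+\psi a=0$, reduce the $y=0$ case to $\psi z+\psi^2 z=0$, conclude $\psi=-\varepsilon$, and deduce commutativity of $(Q,+)$ from $\psi$ being an automorphism equal to inversion. You merely supply the cancellation details that the paper leaves implicit.
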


This quasigroup is medial, paramedial, unipotent and has a right neutral element.

Note that $(Q,\circ)$ is a Ward quasigroup if and only if there is a group $(Q,+)$ and an element $a\in Q$ such that $x\circ y=x-y+a$. The group $(Q,+)$  need not be commutative.

\bigskip\noindent {\bf 4.}
Will now consider a biquasigroup $(Q,\circ,*)$ satisfying the identity \eqref{e4}, i.e.
\begin{eqnarray*}
&&(x\circ z)\circ (y\circ z)=x*y.
\end{eqnarray*}

\begin{theorem}\label{T24} If a biquasigroup $(Q,\circ,*)$ satisfies the identity $\eqref{e4}$, then both quasigroups $(Q,\circ)$ and $(Q,*)$ are unipotent with $q\in Q$ such that $x\circ x=q=x*x$ and $x*y=(x\circ y)\circ q=q\circ (y\circ x)$ for all $x,y\in Q$. 
\end{theorem}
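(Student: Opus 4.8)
The plan is to extract everything from the single hypothesis \eqref{e4} by substituting well-chosen values, using only the fact that in a quasigroup every translation is a bijection.

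First I would establish unipotency. Setting $x=y$ in \eqref{e4} gives $(x\circ z)\circ(x\circ z)=x*x$ for all $x,z\in Q$. Now fix $x$ and let $z$ range over $Q$: since $(Q,\circ)$ is a quasigroup, the left translation $z\mapsto x\circ z$ is a bijection of $Q$, so the element $w=x\circ z$ takes every value in $Q$. Hence $w\circ w$ equals the fixed element $x*x$ for \emph{every} $w\in Q$; in particular this common value cannot depend on $x$, so there is a single $q\in Q$ with $w\circ w=q$ for all $w$, i.e.\ $(Q,\circ)$ is unipotent. Reading the same identity in the other direction gives $x*x=q$ for every $x$, so $(Q,*)$ is unipotent with the same element $q$.

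With unipotency in hand, the two formulas for $x*y$ follow from two further specializations. Putting $z=y$ in \eqref{e4} yields $(x\circ y)\circ(y\circ y)=x*y$, and since $y\circ y=q$ this is exactly $x*y=(x\circ y)\circ q$. Putting $z=x$ in \eqref{e4} yields $(x\circ x)\circ(y\circ x)=x*y$, and since $x\circ x=q$ this reads $x*y=q\circ(y\circ x)$. Combining the three substitutions gives all the claimed equalities.

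I do not anticipate a serious obstacle: the whole argument is a short chain of substitutions into \eqref{e4}. The only step requiring care — and the single place where the quasigroup axioms are genuinely used — is the unipotency argument, where one must observe that $x\circ z$ sweeps out all of $Q$ as $z$ varies, forcing $w\circ w$ to be constant; once $q$ is identified, everything else is immediate.
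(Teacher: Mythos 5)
Your proof is correct and rests on the same key mechanism as the paper's: substituting $x=y$ into \eqref{e4} and using surjectivity of the left translation $z\mapsto x\circ z$ to force $w\circ w$ to be constant, then specializing $z=y$ and $z=x$ for the two formulas. The only difference is that you run this argument uniformly, whereas the paper first splits into the idempotent and non-idempotent cases before applying essentially the same surjectivity trick in the latter; your version shows that case distinction is unnecessary.
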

\begin{proof}  If $(Q,\circ)$ is idempotent, then $x=(x\circ x)\circ(x\circ x)=x*x$. So $(Q,*)$ is idempotent too. If $(Q,*)$ is idempotent, then $x=x*x=(x\circ z)\circ (x\circ z)$ for all $x,z\in Q$. In particular, for $z=x'\in Q$ such that $x=x\circ x'$ we obtain $x=x\circ x$. This shows that both these quasigroups are idempotent or none of them are idempotent.

If both are idempotent, then $x\circ z=(x\circ z)\circ (x\circ z)=x*x=x=x\circ x$ for all $x,z\in Q$, which implies $x=z$. Hence $Q$ has only one element. So it is unipotent.

Now suppose both quasigroups $(Q,\circ)$ and $(Q,*)$ are not idempotent. 
Then there exists $b\in Q$ such that $b*b=q\ne b$ and for any $x\in Q$ there exist $x',x''\in Q$ such that $b\circ x'=x$ and $x\circ x''=x$. Then $x\circ x=(b\circ x')\circ (b\circ x')=b*b=q$ and $x*x=(x\circ x'')\circ(x\circ x'')=x\circ x=q$. Hence, $(Q,\circ)$ and $(Q,*)$ are unipotent, with $q=x\circ x=x*x$ for all $x\in Q$. Also, $x*y=(x\circ x)\circ (y\circ x)=q\circ (y\circ x)$ and $x*y=(x\circ y)\circ (y\circ y)=(x\circ y)\circ q$.   
\end{proof}
\begin{corollary}
If a biquasigroup $(Q,\circ,*)$ satisfies $\eqref{e4}$ and $(Q,\circ)$ has a right neutral element, then $(Q,\circ)=(Q,*)$ is a Ward quasigroup. If $(Q,\circ)$ has a left neutral element, then $x*y=y\circ x$. If $(Q,\circ)$ has a neutral element, then $(Q,\circ)=(Q,*)$ is a commutative Ward quasigroup. 
\end{corollary}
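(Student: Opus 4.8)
The plan is to feed the conclusions of Theorem~\ref{T24} into each of the three hypotheses and watch the unipotent constant $q$ collapse onto the neutral element. Recall that Theorem~\ref{T24} supplies a single $q\in Q$ with $x\circ x=q=x*x$ for all $x$, together with the two representations $x*y=(x\circ y)\circ q$ and $x*y=q\circ(y\circ x)$. Everything will follow by substituting the correct value of $q$ into one of these two formulas.

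The one genuinely non-routine step, which I would establish first, is that \emph{any one-sided neutral element of $(Q,\circ)$ must coincide with $q$}. Indeed, if $e$ is right neutral then $x\circ e=x$ for all $x$; taking $x=e$ gives $e\circ e=e$, and since $e\circ e=q$ we conclude $q=e$. The identical computation handles a left neutral element, since $e\circ x=x$ again forces $e\circ e=e$. After this identification the rest is pure substitution.

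For the right-neutral case I would substitute $q=e$ into the first representation: $x*y=(x\circ y)\circ q=(x\circ y)\circ e=x\circ y$, so $(Q,\circ)=(Q,*)$. Identity \eqref{e4} then reads $(x\circ z)\circ(y\circ z)=x\circ y$, which is precisely \eqref{e3}, i.e.\ \eqref{e1}; hence $(Q,\circ)$ is a Ward quasigroup by Polonijo's characterization recalled in the introduction. For the left-neutral case I would instead substitute $q=e$ into the second representation, obtaining $x*y=q\circ(y\circ x)=e\circ(y\circ x)=y\circ x$.

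Finally, when $e$ is a two-sided neutral element both previous cases apply at once: the right-neutral conclusion gives $(Q,\circ)=(Q,*)$ a Ward quasigroup with $x*y=x\circ y$, while the left-neutral conclusion gives $x*y=y\circ x$. Combining these yields $x\circ y=x*y=y\circ x$, so $(Q,\circ)$ is commutative, and we obtain a commutative Ward quasigroup as claimed. I expect no real obstacle: the whole argument hinges on the observation that $q=e$, after which each assertion drops out of the formulas already established in Theorem~\ref{T24}.
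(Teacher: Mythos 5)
Your proposal is correct and is exactly the intended derivation: the paper states this corollary without proof as an immediate consequence of Theorem~\ref{T24}, and your argument (identifying the one-sided neutral element $e$ with the unipotent constant $q$ via $e\circ e=e=q$, then substituting into $x*y=(x\circ y)\circ q$ resp.\ $x*y=q\circ(y\circ x)$) is the natural way to fill it in. No gaps.
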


Any medial unipotent quasigroup $(Q,\circ)$ can be 'extended' to a medial unipotent biquasigroup $(Q,\circ,*)$ satisfying the identity \eqref{e4}, as follows.

\begin{proposition}
If $(Q,\circ)$ is a medial unipotent quasigroup, then $(Q,\circ,*)$, where $x\circ x=q$ and $x*y=(x\circ y)\circ q$ for all $x,y\in Q$, is a biquasigroup satisfying $\eqref{e4}$. Moreover, if $q$ is a left neutral element of $(Q,\circ)$, then $x*y=y\circ x$. 
\end{proposition}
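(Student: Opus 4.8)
The plan is to obtain everything from three ingredients: the quasigroup laws for $\circ$, its mediality, and its unipotence ($z\circ z=q$). I would not invoke Toyoda's representation here, although one could equally substitute $x\circ y=\varphi x-\varphi y+q$ throughout and compute; the direct argument via the medial law is shorter.

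First I would check that $(Q,*)$ is a quasigroup, which in fact uses neither mediality nor unipotence. Writing $L_x$ and $R_y$ for the translations of $(Q,\circ)$, the definition $x*y=(x\circ y)\circ q$ exhibits the left translation $y\mapsto x*y$ of $*$ as the composite $R_qL_x$ and the right translation $x\mapsto x*y$ as the composite $R_qR_y$. Since $(Q,\circ)$ is a quasigroup these maps are bijections, and composites of bijections are bijections; hence both translations of $*$ are bijective and $(Q,*)$ is a quasigroup. Thus $(Q,\circ,*)$ is a biquasigroup.

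Next I would verify \eqref{e4} by a single application of mediality. Putting $w=z$ in $(x\circ y)\circ(z\circ w)=(x\circ z)\circ(y\circ w)$ gives $(x\circ z)\circ(y\circ z)=(x\circ y)\circ(z\circ z)$, and unipotence turns the right factor $z\circ z$ into $q$, so the expression equals $(x\circ y)\circ q=x*y$; this is exactly \eqref{e4}. For the final clause I would produce the dual form $x*y=q\circ(y\circ x)$ --- either by citing Theorem \ref{T24}, now applicable since \eqref{e4} holds, or directly from mediality via $(x\circ y)\circ(x\circ x)=(x\circ x)\circ(y\circ x)$ together with unipotence. If $q$ is a left neutral element of $(Q,\circ)$ then $q\circ(y\circ x)=y\circ x$, so $x*y=y\circ x$, as required.

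There is no genuine obstacle; every step collapses to one substitution into the medial identity or to composing bijections. The only point needing care is aligning the indices in the medial law so that it is precisely the pair $z\circ z$ (and, for the last clause, the pair $x\circ x$) that can be replaced by $q$; once the substitutions $w=z$ and, respectively, $(a,b,c,d)=(x,y,x,x)$ are chosen, both identities drop out immediately.
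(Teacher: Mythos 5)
Your proof is correct and follows essentially the same route as the paper's: one application of mediality with $w=z$ plus unipotence yields \eqref{e4}, and the substitution $(x,y,x,x)$ in the medial law yields $x*y=q\circ(y\circ x)=y\circ x$ when $q$ is left neutral. The only difference is that you spell out why $(Q,*)$ is a quasigroup (composing the bijective translations $R_q$ with $L_x$ and $R_y$), a point the paper simply asserts.
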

\begin{proof}
Indeed, $(Q,*)$ is a quasigroup and $x*y=(x\circ y)\circ q=(x\circ y)\circ (z\circ z)=(x\circ z)\circ (y\circ z)$. Also, if $q$ is a left neutral element of $(Q,\circ)$, then $x*y=(x\circ y)\circ q=(x\circ y)\circ (x\circ x)=(x\circ x)\circ (y\circ x)=y\circ x$.
\end{proof}

Let $(Q,\circ,*)$ be a biquasigroup linear over a group $(Q,+)$. If it satisfies \eqref{e4}, then 
$\varphi a+a+\psi a=b$ and $\alpha=\varphi^2$. So \eqref{e4} for $x=y=0$ and $\psi z=a$ gives 
$2\varphi a+a+2\psi a=b=\varphi a+a+\psi a$ which implies $a=b$. Consequently $a\circ a=a$. Thus, by Theorem \ref{T24}, $a=q$ and $x*y=a\circ (y\circ x)$. Hence
$$
x*0=a\circ (0\circ x)=\alpha x+a=\varphi a+a+\psi a+\psi^2 x=a+\psi^2 x
$$ 
and
$$
a=x\circ x=\alpha x+a+\beta x=a+\psi^2 x+\beta x.
$$
This gives $\psi^2 x+\beta x=0$, i.e. $\beta=-\psi^2$. Hence $x*y=\varphi^2 x+a-\psi^2 y$.
Since $x\circ x=a=z*z$ we also have $\varphi x+a=a-\psi x$ and $\varphi^2 z+a=a+\psi^2 z$. 
This for $x=\varphi z$ gives $\varphi^2 z+a=a-\psi\varphi z$. Hence $a+\psi^2 z=a-\psi\varphi z$. Consequently, $\psi=-\varphi$ and $x\circ y=\varphi x+a-\varphi y$. So $a=x\circ x=\varphi x+a-\varphi x$. Thus $a\in Z(Q,+)$. Also $\varphi^2=\psi^2$.

Therefore, $x\circ y=\varphi x+a-\varphi y$ and $x*y=\varphi^2 x+a-\varphi^2 y$. Inserting these operations to \eqref{e4} we obtain $-\varphi^2 z-\varphi^2 y+\varphi^2 z=-\varphi^2 y$ for all $y,z\in Q$. Hence $(Q,+)$ is a commutative group. Consequently $(Q,\circ,*)$ is medial and unipotent. This proves the “only if” part of the Theorem \ref{T24a} below. The proof of the “if” part follows from a direct calculation and is omitted

\begin{theorem}\label{T24a}
A biquasigroup $(Q,\circ,*)$ linear over a group $(Q,+)$ satisfies the identity \eqref{e4} if and only if $(Q,+)$ is a commutative group, $x\circ y=\varphi x+a-\varphi y$ and $x*y=\varphi^2 x+a-\varphi^2 y$.
\end{theorem}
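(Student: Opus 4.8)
The plan is to prove both directions by direct computation in $(Q,+)$, carrying out every cancellation with care for order-reversal since the group is \emph{a priori} non-commutative. Writing $x\circ y=\varphi x+a+\psi y$ and $x*y=\alpha x+b+\beta y$ with $\varphi,\psi,\alpha,\beta\in\mathrm{Aut}(Q,+)$ and $a,b\in Q$, substitution into \eqref{e4} gives, for all $x,y,z$,
$$
\varphi^2 x+\varphi a+\varphi\psi z+a+\psi\varphi y+\psi a+\psi^2 z=\alpha x+b+\beta y.
$$
First I would specialise the variables to extract constraints. Putting $x=y=z=0$ yields $b=\varphi a+a+\psi a$; then $y=z=0$ and a left-cancellation give $\alpha=\varphi^2$. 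Since the right-hand side is free of $z$, the case $x=y=0$ collapses, after the substitution $w=\psi z$, to $\varphi w+(a+\psi a)+\psi w=a+\psi a$ for all $w$; choosing $w=a$ and right-cancelling $\psi a$ forces $a=b$, whence $a\circ a=\varphi a+a+\psi a=b=a$.

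At this point Theorem \ref{T24} supplies the structural skeleton for free: from $a\circ a=a$ and unipotence of $(Q,\circ)$ we get $q=a$, both quasigroups satisfy $x\circ x=a=x*x$, and $x*y=a\circ(y\circ x)$. I would then press on these three facts. Computing $x*0=a\circ(0\circ x)=a+\psi^2 x$ and comparing with $x*0=\varphi^2 x+a$ gives $\varphi^2 x+a=a+\psi^2 x$; feeding this into $x*x=a$ yields $\beta=-\psi^2$, so $x*y=\varphi^2 x+a-\psi^2 y$. Next $x\circ x=a$ gives $\varphi x+a=a-\psi x$ for all $x$, and substituting $x=\varphi z$ and combining with $\varphi^2 z+a=a+\psi^2 z$ produces $\psi^2 z=-\psi\varphi z$, i.e. $\psi=-\varphi$ as maps on $Q$.

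The decisive step is commutativity, and this is where I expect the only genuine subtlety. The relation $\psi=-\varphi$ says that the bijection $z\mapsto-\varphi z$ coincides with the automorphism $\psi$; but $z\mapsto-\varphi z$ is a homomorphism of $(Q,+)$ exactly when inversion is, that is, exactly when $(Q,+)$ is commutative (concretely, $\psi(z_1+z_2)=\psi z_1+\psi z_2$ forces $-\varphi z_2-\varphi z_1=-\varphi z_1-\varphi z_2$, and surjectivity of $\varphi$ then gives commutativity). Equivalently, one may reinsert $x\circ y=\varphi x+a-\varphi y$ into \eqref{e4} and read the commutator relation off directly, which is the route taken in the discussion preceding the theorem. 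Once $(Q,+)$ is abelian, $\psi=-\varphi$ gives $\psi^2=\varphi^2$, the identity $x\circ x=\varphi x+a-\varphi x=a$ places $a$ in $Z(Q,+)$ automatically, and the two operations assume the asserted forms $x\circ y=\varphi x+a-\varphi y$ and $x*y=\varphi^2 x+a-\varphi^2 y$. For the converse I would simply substitute these forms back into \eqref{e4}; in a commutative group the verification is a one-line calculation, which is why it is omitted.
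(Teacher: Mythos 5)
Your proposal is correct and follows essentially the same route as the paper's own argument: the same specializations give $b=\varphi a+a+\psi a$, $\alpha=\varphi^2$ and $a=b$, Theorem \ref{T24} is invoked in the same way to get $q=a$ and $x*y=a\circ(y\circ x)$, and the relations $\beta=-\psi^2$ and $\psi=-\varphi$ are derived identically. The only (harmless) deviation is at the final step, where you deduce commutativity of $(Q,+)$ from the fact that $\psi=-\varphi$ must be an automorphism rather than by re-substituting the derived forms into \eqref{e4}; both arguments are valid and you note the latter yourself.
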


It is clear that such a biquasigroup is medial and paramedial. If $a=0$ then it is unipotent.

\bigskip\noindent {\bf 5.}
Will now consider a biquasigroup $(Q,\circ,*)$ satisfying the identity \eqref{e5}, i.e.
$$(x\circ z)\circ (y*z)=x\circ y.$$

\begin{theorem}\label{T25} If a biquasigroup $(Q,\circ,*)$ satisfies the identity $\eqref{e5}$, then both quasigroups $(Q,\circ)$ and $(Q,*)$ have only one idempotent. This idempotent is a right neutral element of these quasigroups. Moreover, $(Q,*)$ is unipotent.
\end{theorem}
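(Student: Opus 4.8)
The strategy is to mine identity \eqref{e5} by substituting carefully chosen values for the parameter $z$, and then to invoke the fact that in a quasigroup every translation is a bijection (so one may cancel) together with the uniqueness of one-sided neutral elements when they exist.

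First I would set $y=z$ in \eqref{e5}, obtaining $(x\circ z)\circ(z*z)=x\circ z$. Since $(Q,\circ)$ is a quasigroup, for fixed $z$ the element $x\circ z$ ranges over all of $Q$ as $x$ does; hence $w\circ(z*z)=w$ for every $w\in Q$, so $z*z$ is a right neutral element of $(Q,\circ)$. Right neutral elements of a quasigroup are unique, so $z*z$ does not depend on $z$; call it $f$. This single substitution shows simultaneously that $f$ is the unique right neutral element of $(Q,\circ)$ and that $(Q,*)$ is unipotent with $x*x=f$ for all $x$.

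Next I would set $z=f$ in \eqref{e5}. Because $f$ is right neutral for $\circ$, the left factor collapses and we get $x\circ(y*f)=x\circ y$; left cancellation in $(Q,\circ)$ then gives $y*f=y$, so $f$ is also a right neutral element of $(Q,*)$. At this stage $f$ is an idempotent of both quasigroups (from $f\circ f=f$ and $f*f=f$), and it remains only to prove it is the \emph{unique} idempotent of each.

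For $(Q,*)$ uniqueness is immediate: unipotence forces any idempotent $e$ to satisfy $e=e*e=f$. The only real obstacle is the uniqueness of the idempotent of $(Q,\circ)$, since $\circ$ is not a priori unipotent. Here the plan is to turn an idempotent into a right neutral element: if $e\circ e=e$, substitute $x=z=e$ in \eqref{e5} to obtain $(e\circ e)\circ(y*e)=e\circ y$, i.e. $e\circ(y*e)=e\circ y$; cancelling $e$ on the left yields $y*e=y$ for all $y$, so $e$ is a right neutral element of $(Q,*)$, whence $e=f$ by uniqueness. This establishes all three assertions. The argument is entirely elementary; the points to handle with care are the repeated appeals to bijectivity/cancellation and to uniqueness of one-sided neutral elements, and the one genuinely clever move is recognizing that an idempotent of $\circ$ can be fed back into \eqref{e5} to manufacture a right neutral element of $*$.
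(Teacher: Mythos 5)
Your proof is correct and follows essentially the same route as the paper's: both arguments mine identity \eqref{e5} with targeted substitutions and use cancellation, with the key move in each being that an idempotent of one operation, fed back into \eqref{e5}, yields a right neutral element. The only difference is cosmetic ordering --- your opening substitution $y=z$ delivers unipotence of $(Q,*)$ immediately, whereas the paper first produces the right neutral element via the local right identities $\overline{x}$ and deduces unipotence last.
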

\begin{proof} For each $x\in Q$ there is uniquely determined $\overline{x}\in Q$ such that $x\circ\overline{x}=x$. Then for $x,y\in Q$, by \eqref{e5}, we have 
$$x\circ y=(x\circ \overline{x})\circ (y*\overline{x})=x\circ(y*\overline{x}).
$$ So, $y=y*\overline{x}$ for each $y\in Q$. Also $y\circ y=(y\circ \overline{x})\circ (y*\overline{x})=(y\circ\overline{x})\circ y$, hence $y=y\circ\overline{x}$ for all $y\in Q$. Thus $e=\overline{x}$ is a right neutral element of $(Q,\circ)$ and $(Q,*)$. There are no other idempotents in $(Q,\circ)$ and $(Q,*)$. Indeed, if $a*a=a$, then for each $x\in Q$ 
$$x\circ a=(x\circ a)\circ (a*a)=(x\circ a)\circ a,$$ so $x\circ a=x=x\circ e$. Hence $a=e$. Similarly for $a\circ a=a$ we have 
$$a\circ x=(a\circ a)\circ (x*a)=a\circ (x*a),$$ which implies $x*a=x=x*e$, so also in this case $a=e$.

For each $x\in Q$ there exists $x'\in Q$ such that $x*x=x\circ x'$. Thus, by \eqref{e5}, 
$$(x\circ x)\circ e=x\circ x=(x\circ x)\circ (x*x)=(x\circ x)\circ (x\circ x'),$$ which implies $e=x\circ x'=x*x$. So, $(Q,*)$ is unipotent.
\end{proof}

The following example shows that $(Q,\circ)$ may not be unipotent.

\begin{example}\rm
Let $(Q,\circ)$ be a group. Then $(Q,\circ,*)$, where $x*y=y^{-1}\circ x$ is an example of a biquasigroup satisfying \eqref{e5} in which only one of quasigroups $(Q,\circ)$ and $(Q,*)$ has a left neutral element. Moreover, $(Q,*)$ is unipotent but $(Q,\circ)$ is unipotent only in the case when it is a Boolean group.
\end{example}
\begin{corollary}
If in a biquasigroup $(Q,\circ,*)$ satisfying $\eqref{e5}$ one of quasigroups $(Q,\circ)$ or $(Q,*)$ is idempotent, then $Q$ has only one element.
\end{corollary}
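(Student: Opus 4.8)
The plan is to read the corollary directly off Theorem \ref{T25}, which has already done all the structural work. That theorem asserts that any biquasigroup $(Q,\circ,*)$ satisfying \eqref{e5} has exactly one idempotent in each of $(Q,\circ)$ and $(Q,*)$ --- namely a common right neutral element $e$ --- and that $(Q,*)$ is unipotent. The corollary then follows from a short uniqueness-of-idempotent argument in each of the two cases, and crucially it requires no linearity hypothesis, so it holds for arbitrary biquasigroups satisfying \eqref{e5}.

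First I would treat the case where the quasigroup $(Q,\circ)$ is idempotent. By definition this means every element satisfies $x\circ x=x$, so that \emph{every} element of $Q$ is an idempotent of $(Q,\circ)$. But Theorem \ref{T25} guarantees that $(Q,\circ)$ possesses exactly one idempotent; hence all elements of $Q$ must coincide with that single idempotent $e$, giving $Q=\{e\}$. The case where $(Q,*)$ is idempotent admits two equally short routes. One may argue exactly as above, since Theorem \ref{T25} also forces $(Q,*)$ to have a unique idempotent. Alternatively, one combines idempotency with the unipotence of $(Q,*)$: from $x*x=x$ and $x*x=e$ one obtains $x=e$ for every $x\in Q$, again collapsing $Q$ to a single point.

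There is essentially no obstacle here, as all the content is packed into Theorem \ref{T25}. The only point requiring care is the terminology: one must keep in mind that an \emph{idempotent quasigroup} is one in which \emph{every} element is idempotent, so that the mere existence of a unique idempotent is enough to force $|Q|=1$. Were ``idempotent'' to mean only that some single idempotent exists, the conclusion would fail, and indeed the preceding Example already exhibits non-trivial models satisfying \eqref{e5} in which neither quasigroup is idempotent in this global sense.
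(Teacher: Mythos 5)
Your proof is correct and matches the paper's intent: the corollary is stated immediately after Theorem \ref{T25} with no written proof, precisely because it follows at once from the uniqueness of the idempotent established there, which is exactly the argument you give. Your alternative route via unipotence of $(Q,*)$ is an equally valid one-line variant of the same idea.
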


\begin{proposition} Let $(Q,\circ,*)$ be a biquasigroup satisfying \eqref{e5}. If $(Q,\circ)$ is a Ward quasigroup, then $(Q,\circ)=(Q,*)$.
\end{proposition}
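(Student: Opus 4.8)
The plan is to exploit the fact that a Ward quasigroup $(Q,\circ)$ satisfies identity \eqref{e1} (equivalently \eqref{e3}), namely $(x\circ z)\circ(y\circ z)=x\circ y$, and to compare this directly with the hypothesis \eqref{e5}, $(x\circ z)\circ(y*z)=x\circ y$. Since the right-hand sides coincide, for all $x,y,z\in Q$ we get
$$
(x\circ z)\circ(y\circ z)=(x\circ z)\circ(y*z).
$$
First I would fix $x$ and $z$ and treat $x\circ z$ as a single element of $Q$. Because $(Q,\circ)$ is a quasigroup, left translation by any element is injective, so the two arguments following $x\circ z$ must agree. Cancelling $x\circ z$ on the left yields $y\circ z=y*z$ for all $y,z\in Q$, which is precisely the assertion $(Q,\circ)=(Q,*)$.

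The only facts I would need are that a Ward quasigroup satisfies \eqref{e1} (recorded in the Introduction via Polonijo's result) and that left cancellation holds in any quasigroup. I would not require the full strength of Theorem \ref{T25} for this argument, though it is reassuring that its conclusion is consistent with the outcome: it forces $(Q,\circ)$ and $(Q,*)$ to share the same right neutral idempotent $e$ and makes $(Q,*)$ unipotent, exactly as a Ward quasigroup $(Q,\circ)$ is unipotent with right neutral element $e=x\circ x$.

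There is no serious obstacle here; the proof is essentially a one-line cancellation once the two identities are set side by side. The only point deserving a moment's care is that the comparison holds for every triple $(x,y,z)$ at once, so that after cancellation the equality $y\circ z=y*z$ is genuinely universal rather than valid only for distinguished elements. This is immediate because both \eqref{e1} and \eqref{e5} are identities quantified over all $x,y,z\in Q$.
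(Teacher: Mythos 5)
Your proof is correct. A Ward quasigroup satisfies Polonijo's identity \eqref{e1}, so combining it with \eqref{e5} gives $(x\circ z)\circ(y\circ z)=x\circ y=(x\circ z)\circ(y*z)$ for all $x,y,z\in Q$, and cancelling the element $x\circ z$ on the left (left translations in a quasigroup are bijections) yields $y\circ z=y*z$; since for each pair $(y,z)$ the element $x$ may be chosen arbitrarily, the conclusion is universal and $(Q,\circ)=(Q,*)$. This is a genuinely different, and shorter, route than the paper's. The paper instead invokes the structure theory from \cite{Car}: it represents the Ward quasigroup as $x\circ y=x\cdot y^{-1}$ over a group $(Q,\cdot)$ with neutral element $e$, applies \eqref{e5} at $z=x$ to get $x\circ y=(x\circ x)\circ(y*x)=e\circ(y*x)$, and then unwinds the auxiliary identities $e\circ(x\circ y)=y\circ x$ and $e\circ(e\circ u)=u$ to conclude $x\circ y=x*y$. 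Your argument buys elementarity: it needs only the single-identity characterization \eqref{e1} and quasigroup cancellation, with no appeal to the associated group. The paper's version stays closer to the original two-axiom definition of a Ward quasigroup and to the group representation that it reuses throughout the section, but as a proof of this proposition it is strictly more machinery than necessary. Both arguments are complete.
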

\begin{proof} Since $(Q,\circ)$ is a Ward quasigroup, there exists a group $(Q,\cdot)$ such that $x\circ y=x\cdot y^{-1}$ and $e\circ (x\circ y)=y\circ x$, where $e$ is the neutral element of the group $(Q,\cdot)$ (cf.\ \cite{Car}). Then $x\circ y=(x\circ x)\circ (y*x)=e\circ (y*x)$ and so $x\circ y=e\circ (y\circ x)= e\circ (e\circ (x*y))=x*y$. Hence $(Q,\circ)=(Q,*)$.
\end{proof}

\begin{proposition} Let $(Q,\circ,*)$ be a biquasigroup satisfying \eqref{e5}. If $(Q,\circ)$ is medial and unipotent, then $(Q,\circ)=(Q,*)$.
\end{proposition}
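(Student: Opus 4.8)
The plan is to show that, under these hypotheses, $(Q,\circ)$ is in fact a Ward quasigroup, after which the preceding Proposition applies verbatim to give $(Q,\circ)=(Q,*)$.

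First I would collect what Theorem \ref{T25} already provides. Its unique idempotent $e$ is a right neutral element of $(Q,\circ)$, so $x\circ e=x$ for every $x\in Q$. Moreover, since $(Q,\circ)$ is assumed unipotent, the constant value $q=x\circ x$ satisfies $q\circ q=q$, so $q$ is idempotent; by the uniqueness in Theorem \ref{T25} we get $q=e$, i.e.\ $x\circ x=e$ for all $x$.

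Next I would exploit mediality. By the Toyoda Theorem, $(Q,\circ)$ is linear over a commutative group $(Q,+)$, say $x\circ y=\varphi x+a+\psi y$ with commuting automorphisms $\varphi,\psi$ and $a\in Q$; write $0$ for the neutral element of $(Q,+)$. Unipotency forces the homomorphism $x\mapsto\varphi x+\psi x$ to be constant; since a homomorphism vanishing has value $0$ at $0$, this constant is $0$, which yields $\psi=-\varphi$ together with $a=q=e$. Hence $x\circ y=\varphi x+a-\varphi y$. Imposing now the right neutral law $x\circ e=x$ and evaluating at $x=0$ gives $\varphi a=a$, after which the law collapses to $\varphi x=x$ for all $x$, so $\varphi=\varepsilon$. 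Therefore $x\circ y=x-y+a$ over the commutative group $(Q,+)$.

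By the remark following Theorem \ref{T23}, a quasigroup of the form $x\circ y=x-y+a$ is precisely a Ward quasigroup, so $(Q,\circ)$ is one, and the preceding Proposition gives $(Q,\circ)=(Q,*)$. The one delicate point is the extraction of $\psi=-\varphi$, $a=e$ and $\varphi=\varepsilon$ from the linear representation: it rests on the facts that $\varphi,\psi$ fix $0$ and that a constant homomorphism is trivial, so one must track the additive constants carefully. Once the normal form $x\circ y=x-y+a$ is in hand, the conclusion is an immediate appeal to the earlier results and requires no further computation.
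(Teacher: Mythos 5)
Your proof is correct, but it takes a genuinely different route from the paper's. The paper argues directly and elementarily: putting $z=y$ and $y=x$ into \eqref{e5} gives $(x\circ y)\circ(x*y)=x\circ x$; writing $x*y=x\circ z$ and applying mediality turns the left side into $(x\circ x)\circ(y\circ z)$, and comparing with $(x\circ x)\circ e=x\circ x$ (using that the unipotent value is the unique idempotent $e$) yields $y\circ z=e=y\circ y$, hence $z=y$ and $x*y=x\circ y$ — no representation theorem needed. You instead invoke Toyoda's theorem to write $x\circ y=\varphi x+a+\psi y$ over a commutative group, use unipotency to get $\psi=-\varphi$ and $a=e$, use the right neutral element to get $\varphi=\varepsilon$, conclude $x\circ y=x-y+a$ is a (subtractive) Ward quasigroup, and then apply the preceding Proposition. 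Your steps all check out (the constant endomorphism $x\mapsto\varphi x+\psi x$ must vanish since it vanishes at $0$, and the normal form $x-y+a$ is indeed the Ward form noted after Theorem \ref{T23}). The trade-off: the paper's argument is shorter and self-contained, using mediality only once as an identity; yours is heavier machinery but extracts more — it shows as a byproduct that any medial unipotent quasigroup with a right neutral element is a subtractive Ward quasigroup, which makes the reduction to the Ward case transparent.
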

\begin{proof}
For every $x,y\in Q$ there exists $z\in Q$ such that $x*y=x\circ z$. Since $(Q,\circ)$ is medial, 
$$(x\circ x)\circ e=x\circ x=(x\circ y)\circ (x*y)=(x\circ y)\circ (x\circ z)=(x\circ x)\circ (y\circ z).$$ Thus, $y\circ z=e=y\circ y$, where $e$ is the right neutral element of $(Q,\circ)$. Therefore $y=z$ and consequently, $x*y=x\circ y$.
\end{proof}

\begin{proposition}
A biquasigroup $(Q,\circ,*)$ linear over a group $(Q,+)$ satisfies \eqref{e5} if and only if a group $(Q,+)$ is a commutative group, $x\circ y=x+\psi y-\psi b$ and $x*y=x-y+b$.
\end{proposition}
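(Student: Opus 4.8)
The plan is to substitute the linear expressions $x\circ y=\varphi x+a+\psi y$ and $x*y=\alpha x+b+\beta y$ directly into identity \eqref{e5} and to extract a single functional identity in $(Q,+)$. Writing out $(x\circ z)\circ(y*z)$ and $x\circ y$ yields
$$\varphi^2 x+\varphi a+\varphi\psi z+a+\psi\alpha y+\psi b+\psi\beta z=\varphi x+a+\psi y,$$
which must hold for all $x,y,z\in Q$. Because $\varphi,\psi,\alpha,\beta$ are automorphisms, hence bijections, specialising the arguments (letting $y=z=0$ while $x$ ranges, or fixing some variables and varying one) still lets the relevant images sweep all of $Q$, so nothing is lost by substitution.

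First I would set $x=y=z=0$ to get $\varphi a+a+\psi b=a$; then, keeping $y=z=0$ and letting $x$ vary, the contiguous block $\varphi a+a+\psi b$ collapses to $a$ by this relation, leaving $\varphi^2x+a=\varphi x+a$, whence $\varphi^2x=\varphi x$ and $\varphi=\varepsilon$. With $\varphi=\varepsilon$ the identity reduces, after cancelling $x$ and $a$ on the left, to
$$\psi z+a+\psi\alpha y+\psi b+\psi\beta z=\psi y,$$
and the case $y=z=0$ now reads $a+\psi b=0$, i.e. $a=-\psi b$.

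The decisive step comes from putting $y=0$ and varying $z$: using $a+\psi b=0$ the middle pair annihilates, leaving $\psi z+\psi\beta z=0$ for all $z$, so $\psi\beta z=\psi(-z)$ and therefore $\beta z=-z$ for every $z$. This is the crux: $\beta$ is an automorphism yet coincides with the inversion map $z\mapsto -z$, and inversion is a homomorphism only when the group is commutative; hence $(Q,+)$ must be abelian. Once commutativity is available, setting $z=0$ and again using $a+\psi b=0$ gives $\psi\alpha y=\psi y$, so $\alpha=\varepsilon$, and inserting $a=-\psi b$, $\varphi=\alpha=\varepsilon$, $\beta=-\varepsilon$ into the linear forms produces exactly $x\circ y=x+\psi y-\psi b$ and $x*y=x-y+b$. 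I expect the only real friction to be the bookkeeping \emph{before} commutativity is established, namely keeping the noncommutative term order correct so that $a$ and $\psi b$ end up adjacent and cancel; after that the ``if'' direction is a routine verification, plugging the two stated operations back into \eqref{e5} and checking that the $\pm\psi z$ and $\pm\psi b$ terms cancel to leave $x+\psi y-\psi b=x\circ y$.
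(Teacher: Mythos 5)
Your proposal is correct and follows essentially the same route as the paper: substitute the linear forms, use $x=y=z=0$ to get $\varphi a+a+\psi b=a$ and hence $\varphi=\varepsilon$ and $a+\psi b=0$, then put $y=0$ to obtain $\psi z+\psi\beta z=0$, conclude $\beta=-\varepsilon$ and thus commutativity of $(Q,+)$, and finally $\alpha=\varepsilon$. The only difference is that you spell out the noncommutative bookkeeping that the paper leaves implicit.
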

\begin{proof}
If a biquasigroup $(Q,\circ,*)$ linear over a group $(Q,+)$ satisfies \eqref{e5}, then $\varphi a+a+\psi b=a$ and $\varphi=\varepsilon$. Thus $a+\psi b=0$. This together with \eqref{e5} for $y=0$ gives $\psi z+\psi\beta z=0$. So, $\beta z=-z$ for all $z\in Q$. Thus $(Q,+)$ is a commutative group. Consequently, $\alpha=\varepsilon$. Therefore, $x\circ y=x+\psi y-\psi b$, \ $x*y=x-y+b$.

The proof of the converse follows from a direct calculation and is omitted.
\end{proof}

A biquasigroup $(Q,\circ,*)$ linear over a group and satisfying \eqref{e5} is medial and both its quasigroups $(Q,\circ)$ and $(Q,*)$ have the same right neutral element. If $\psi^2=\varepsilon$ then this biquasigroup is also  paramedial.

\bigskip\noindent {\bf 6.}
Will now consider a biquasigroup $(Q,\circ,*)$ satisfying the identity \eqref{e6}, i.e.
\begin{eqnarray*}
&&(x\circ z)*(y\circ z)=x\circ y.
\end{eqnarray*}

\begin{theorem}\label{T26} A biquasigroup $(Q,\circ,*)$ satisfies the identity $(\ref{e6})$ if and only if there is a group $(G,\cdot)$ and a bijection $\alpha$ on $Q$ such that $x\circ y=(\alpha x)^{-1}\cdot(\alpha y)$ and $x*y=x\cdot y^{-1}$. 
\end{theorem}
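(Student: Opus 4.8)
The plan is to prove the nontrivial (``only if'') direction by first showing that $(Q,*)$ is a Ward quasigroup and then reading off the form of $\circ$ from the associated group; the converse is a one-line verification. For the ``if'' part, with $x\circ y=(\alpha x)^{-1}\cdot(\alpha y)$ and $x*y=x\cdot y^{-1}$ one simply computes
$(x\circ z)*(y\circ z)=(x\circ z)\cdot(y\circ z)^{-1}=(\alpha x)^{-1}(\alpha z)(\alpha z)^{-1}(\alpha y)=(\alpha x)^{-1}(\alpha y)=x\circ y$,
so \eqref{e6} holds.

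The key observation for the converse is that \eqref{e6} expresses $*$ as an isotope of $\circ$. I would fix any $z_0\in Q$ and let $\gamma$ denote the inverse of the right translation $x\mapsto x\circ z_0$ of $(Q,\circ)$, which is a bijection. Writing $u=x\circ z_0$ and $v=y\circ z_0$ in \eqref{e6} (with $z=z_0$) gives the identity $u*v=\gamma(u)\circ\gamma(v)$ for all $u,v\in Q$. This single formula carries the main weight of the argument: to verify the Ward identity $(a*c)*(b*c)=a*b$, i.e.\ \eqref{e1} for $*$, I would substitute the isotopy formula into the two inner products to get $(a*c)*(b*c)=(\gamma(a)\circ\gamma(c))*(\gamma(b)\circ\gamma(c))$, and then apply \eqref{e6} once more with $x=\gamma(a)$, $y=\gamma(b)$, $z=\gamma(c)$; the right-hand side collapses to $\gamma(a)\circ\gamma(b)=a*b$. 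By Polonijo's characterization recalled in the Introduction, $(Q,*)$ is then a Ward quasigroup, so there is a group $(Q,\cdot)$ with $x*y=x\cdot y^{-1}$, whose identity $q$ is the common value $x*x$ and with $x^{-1}=q*x$.

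It remains to put $\circ$ into the required form. Setting $x=y$ in \eqref{e6} shows both quasigroups are unipotent with $x\circ x=q=x*x$, and setting $x=z$ yields the reversal rule $q*(y\circ z)=z\circ y$, that is $(y\circ z)^{-1}=z\circ y$ in the group. Fixing $x_0\in Q$ and defining $\alpha$ to be the left translation $t\mapsto x_0\circ t$ (a bijection), I expect $(\alpha x)^{-1}\cdot(\alpha y)$ to reduce, via the reversal rule and the definition $a\cdot b=a*b^{-1}$, first to $(x\circ x_0)\cdot(x_0\circ y)$ and then to $(x\circ x_0)*(y\circ x_0)$, which equals $x\circ y$ by \eqref{e6} with $z=x_0$. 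This establishes $x\circ y=(\alpha x)^{-1}\cdot(\alpha y)$ and completes the proof with $(G,\cdot)=(Q,\cdot)$.

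The main obstacle is the middle step, namely recognising that \eqref{e6} says $*$ is the $\gamma$-isotope of $\circ$; this is precisely what makes the Ward identity for $*$ drop out of a second application of \eqref{e6}, rather than requiring a direct verification of associativity. Once $(Q,*)$ is known to be a Ward quasigroup, everything else is bookkeeping with the group operation and the reversal rule $(y\circ z)^{-1}=z\circ y$.
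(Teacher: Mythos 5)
Your proposal is correct and follows essentially the same route as the paper: first use \eqref{e6} twice (via the substitution $x=x'\circ z_0$, which is exactly your $\gamma$) to show $(Q,*)$ satisfies the Ward identity and hence comes from a group $(Q,\cdot)$ with $x*y=x\cdot y^{-1}$, then read off $x\circ y=(\alpha x)^{-1}\cdot(\alpha y)$ from the reversal rule $(y\circ z)^{-1}=z\circ y$ with $\alpha$ a left translation of $(Q,\circ)$ (the paper takes $x_0=e$, you take an arbitrary $x_0$, which is an immaterial difference). All the steps you sketch do check out.
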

\begin{proof} $\Rightarrow$: Let $x,y,z\in Q$. Then for fixed $q\in Q$ there are $x',y',z'\in Q$ such that and $x=x'\circ q$, $y=y'\circ q$ and $z=z'\circ q$. Then, $x*z=(x'\circ q)*(z'\circ q)=x'\circ z'$ and $y*z=(y'\circ q)*(z'\circ q)=y'\circ z'$. So, 
$$(x*z)*(y*z)=(x'\circ z')*(y'\circ z')=x'\circ y'=(x'\circ q)*(y'\circ q)=x*y.$$ Therefore, $(Q,*)$ is a Ward quasigroup and there exists a group $(Q,\cdot)$ such that $x*y=x\cdot y^{-1}$ and $x\cdot y=x*(e*y)$, where $e=w*w$ for any $w\in Q$ and $x^{-1}=e*x$.

Let $x\in Q$. Then, $e=(x\circ z)*(x\circ z)=x\circ x$. So, $e*(x\circ y)=(y\circ y)*(x\circ y)=y\circ x$, for any $y\in Q$. Let $\alpha x=e\circ x$. Then, $(\alpha x)^{-1}=e*(e\circ x)=x\circ e$. Thus, $(\alpha x)^{-1}\cdot (\alpha y)=(x\circ e)\cdot (e\circ y)=(x\circ e)*(e*(e\circ y))=(x\circ e)*(y\circ e)=x\circ y$.
  	
$\Leftarrow$: Let $x,y,z\in Q$. Then, $(x\circ z)*(y\circ z)=[(\alpha x)^{-1}\cdot (\alpha z)]*[(\alpha y)^{-1}\!\cdot (\alpha z)]=(\alpha x)^{-1}\!\cdot(\alpha z)\cdot(\alpha z)^{-1}\!\cdot(\alpha y)=(\alpha x)^{-1}\!\cdot(\alpha y) =x\circ y$.    
\end{proof}
\begin{corollary}\label{C-uni}
If a biquasigroup $(Q,\circ,*)$ satisfies the identity $(\ref{e6})$, then it is unipotent.
\end{corollary}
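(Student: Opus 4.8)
The plan is to read off unipotence directly from the structural description supplied by Theorem \ref{T26}. Recall that a biquasigroup is \emph{unipotent} when there is a single element $q\in Q$ with $x\circ x=q=x*x$ for every $x\in Q$, so the whole task reduces to exhibiting such a $q$ and verifying that it is the same for both operations and independent of $x$.

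First I would invoke Theorem \ref{T26} to fix a group $(Q,\cdot)$ with identity $e$ and a bijection $\alpha$ for which $x\circ y=(\alpha x)^{-1}\cdot(\alpha y)$ and $x*y=x\cdot y^{-1}$, the representation being available precisely because $(Q,\circ,*)$ satisfies \eqref{e6}. Then I would substitute $y=x$ into each formula. For the first operation this gives $x\circ x=(\alpha x)^{-1}\cdot(\alpha x)=e$, and for the second $x*x=x\cdot x^{-1}=e$. Since in both cases the value collapses to the group identity $e$, which manifestly does not depend on $x$, the element $q=e$ witnesses unipotence.

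There is essentially no obstacle here: the only point worth flagging is that the common value must coincide for $\circ$ and $*$ and be independent of the variable, and both requirements are automatic because an element times its inverse is always $e$ in a group. Indeed, the equalities $x\circ x=e$ and $x*x=e$ already surface inside the proof of Theorem \ref{T26} itself (where one finds $e=(x\circ z)*(x\circ z)=x\circ x$ together with $e=w*w$), so the corollary can equally well be harvested verbatim from that argument without re-deriving anything.
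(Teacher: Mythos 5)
Your proof is correct and is exactly the argument the paper intends: the corollary is an immediate consequence of Theorem \ref{T26}, since substituting $y=x$ into $x\circ y=(\alpha x)^{-1}\cdot(\alpha y)$ and $x*y=x\cdot y^{-1}$ yields the common value $e$ for both squares. Your observation that these equalities already appear inside the proof of Theorem \ref{T26} matches why the paper states this as a corollary without further proof.
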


\begin{corollary}
If in a biquasigroup $(Q,\circ,*)$ satisfying the identity $(\ref{e6})$ one of quasigroups $(Q,\circ)$ and $(Q,*)$ is commutative, then also the second is commutative. In this case both quasigroups are induced by the same Boolean group.
\end{corollary}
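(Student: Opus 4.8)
The plan is to invoke Theorem~\ref{T26}, which supplies a group $(Q,\cdot)$ with neutral element $e$ and a bijection $\alpha$ on $Q$ such that $x\circ y=(\alpha x)^{-1}\cdot(\alpha y)$ and $x*y=x\cdot y^{-1}$. Everything then reduces to showing that commutativity of either quasigroup forces $(Q,\cdot)$ to be a Boolean group; once that is known, both operations collapse into operations built from the Boolean (hence abelian) group $(Q,\cdot)$ and are automatically commutative.

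First I would treat the case in which $(Q,*)$ is commutative. The identity $x*y=y*x$ reads $x\cdot y^{-1}=y\cdot x^{-1}$ for all $x,y\in Q$. Putting $y=e$ gives $x=x^{-1}$ for every $x$, so $(Q,\cdot)$ is a Boolean group and therefore abelian. Consequently $x*y=x\cdot y$ and $x\circ y=(\alpha x)\cdot(\alpha y)$, and the latter is commutative precisely because $(Q,\cdot)$ is. Thus $(Q,\circ)$ is commutative and both quasigroups are induced by the same Boolean group $(Q,\cdot)$.

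The case in which $(Q,\circ)$ is commutative is handled the same way, the only additional point being the use of the bijectivity of $\alpha$. The identity $x\circ y=y\circ x$ becomes $(\alpha x)^{-1}\cdot(\alpha y)=(\alpha y)^{-1}\cdot(\alpha x)$; since $\alpha$ is onto, writing $u=\alpha x$ and $v=\alpha y$ yields $u^{-1}\cdot v=v^{-1}\cdot u$ for all $u,v\in Q$, and $v=e$ gives $u=u^{-1}$. Again $(Q,\cdot)$ is Boolean, whence $x*y=x\cdot y$ is commutative and, as before, both quasigroups arise from the same Boolean group.

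There is no real obstacle here: the structural content is already packaged in Theorem~\ref{T26}, and the entire argument rests on the single substitution $y=e$ (respectively $v=e$). The only thing to watch is to invoke the surjectivity of $\alpha$ in the $(Q,\circ)$-commutative case, so that the substitution ranges over all of $Q$ and not merely over a subset; as $\alpha$ is a bijection this is immediate.
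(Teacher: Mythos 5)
Your proof is correct and follows exactly the route the paper intends: the corollary is stated without proof as an immediate consequence of Theorem~\ref{T26}, and your argument (substituting the neutral element into the commutativity identity, using surjectivity of $\alpha$ in the $\circ$-case, to force $x=x^{-1}$) is the natural way to fill in that derivation.
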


\begin{proposition}
A biquasigroup $(Q,\circ,*)$ linear over a group $(Q,+)$ satisfies the identity $\eqref{e6}$ if and only if a group $(Q,+)$ is commutative, $ x\circ y=\varphi x-\varphi y+a$ and  $x*y=x-y+a$.
\end{proposition}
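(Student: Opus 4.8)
The plan is to substitute the linear forms $x\circ y=\varphi x+a+\psi y$ and $x*y=\alpha x+b+\beta y$ into \eqref{e6} and then extract information by successively specialising the three variables, exactly as in the treatment of \eqref{e2}--\eqref{e5}. Expanding $(x\circ z)*(y\circ z)=x\circ y$ gives the master identity
\[
\alpha\varphi x+\alpha a+\alpha\psi z+b+\beta\varphi y+\beta a+\beta\psi z=\varphi x+a+\psi y,
\]
which must hold for all $x,y,z\in Q$; here I keep careful track of the order of the summands, since $(Q,+)$ is not yet known to be commutative. First I would put $x=y=z=0$ to get the constant relation $\alpha a+b+\beta a=a$. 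Next, setting $y=z=0$ and using this relation to collapse the contiguous block $\alpha a+b+\beta a$ into $a$ leaves $\alpha\varphi x+a=\varphi x+a$; right-cancelling $a$ and using that $\varphi$ is surjective yields $\alpha=\varepsilon$.

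With $\alpha=\varepsilon$ the master identity becomes $\varphi x+a+\psi z+b+\beta\varphi y+\beta a+\beta\psi z=\varphi x+a+\psi y$, and left-cancelling $\varphi x+a$ gives the reduced identity $(*)$, namely $\psi z+b+\beta\varphi y+\beta a+\beta\psi z=\psi y$. Specialising $z=0$ in $(*)$ isolates the inner block as $b+\beta\varphi y+\beta a=\psi y$. Substituting this block back into the middle of $(*)$ — legitimate by associativity — turns $(*)$ into $\psi z+\psi y+\beta\psi z=\psi y$. Setting $y=0$ then gives $\psi z+\beta\psi z=0$, i.e. $\beta\psi z=-\psi z$ for every $z$; since $\psi$ is surjective, this says $\beta$ is the inversion map $w\mapsto -w$.

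The crux of the argument — and the step I expect to be the main obstacle — is precisely this point: $\beta$ is \emph{assumed} to be an automorphism, yet we have just shown $\beta w=-w$. Inversion is a homomorphism of $(Q,+)$ only when the group is abelian, since $\beta(u+v)=\beta u+\beta v$ forces $-(u+v)=-u-v$, hence $u+v=v+u$; so this compels $(Q,+)$ to be commutative. Once commutativity is secured the rest is linear bookkeeping: the block relation $b+\beta\varphi y+\beta a=\psi y$ becomes $b-a-\varphi y=\psi y$, whence $b=a$ (from $y=0$) and $\psi=-\varphi$, consistently with $\alpha a+b+\beta a=a$. Assembling the pieces gives $x\circ y=\varphi x+a-\varphi y=\varphi x-\varphi y+a$ and $x*y=x+a-y=x-y+a$, as claimed. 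The converse is a routine verification: inserting these two forms into $(x\circ z)*(y\circ z)$ and using commutativity to cancel the opposite $\varphi z$ and $a$ contributions returns $\varphi x-\varphi y+a=x\circ y$, so I would simply record it as a direct calculation.
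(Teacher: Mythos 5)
Your proof is correct and follows essentially the same route as the paper: substitute the linear forms, specialise variables to get $\alpha a+b+\beta a=a$ and $\alpha=\varepsilon$, deduce $\beta=-\varepsilon$ from the reduced identity, and conclude commutativity from the fact that inversion is an automorphism only in an abelian group. The only cosmetic difference is that the paper extracts $\psi=-\varphi$ by substituting $x*y=x-y+a$ back into the identity, while you read it off the block relation $b+\beta\varphi y+\beta a=\psi y$; both are equivalent.
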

\begin{proof}
If a biquasigroup $(Q,\circ,*)$ linear over a group $(Q,+)$ satisfies the identity $\eqref{e6}$ then 
$\alpha a+b+\beta a=a$ and $\alpha=\varepsilon$. So, $b+\beta a=0$. Thus \eqref{e6} for $x=y=0$ gives $\psi z+\beta\psi z=0$ which means that $\beta v=-v$ for each $v\in Q$. Hence $(Q,+)$ is commutative and $a=b$.
Therefore  $x*y=x-y+a$.  Substituting this operation to \eqref{e6} we obtain 
$x\circ y=\varphi x-\varphi y+a$.

The converse statement is obvious.
\end{proof}
\begin{corollary} 
A linear biquasigroup satisfying the identity \eqref{e6} is unipotent.
\end{corollary}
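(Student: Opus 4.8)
The plan is to read the explicit normal forms of the two operations off the immediately preceding Proposition and then simply set $y=x$. Since $(Q,\circ,*)$ is linear over a group and satisfies \eqref{e6}, that Proposition tells us exactly that $(Q,+)$ is commutative, that $x\circ y=\varphi x-\varphi y+a$, and that $x*y=x-y+a$ for all $x,y\in Q$. So the entire content of the statement is already encoded in these two formulas, and all that remains is a one-line substitution.

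Concretely, I would compute the two diagonal products. For the first operation, $x\circ x=\varphi x-\varphi y+a$ with $y=x$ gives $\varphi x-\varphi x+a=a$, since $\varphi x+(-\varphi x)=0$ holds in the group (this needs no commutativity). For the second, $x*x=x-x+a=a$ for the same reason. Thus both diagonal products collapse to the single constant $a$, independent of $x$. Taking $q=a$, we have $x\circ x=q=x*x$ for every $x\in Q$, which is precisely the definition of a unipotent biquasigroup. The only point worth flagging is that unipotency demands the \emph{same} constant $q$ for both operations; this is automatic here because both computations return $a$.

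I expect no genuine obstacle: once the normal forms from the preceding Proposition are in hand, the argument is immediate. Alternatively — and more conceptually — the result follows at once from Corollary \ref{C-uni}, which already asserts that \emph{every} biquasigroup satisfying \eqref{e6}, linear or not, is unipotent. A linear biquasigroup satisfying \eqref{e6} is a fortiori a biquasigroup satisfying \eqref{e6}, so it inherits unipotency directly. I would note this shorter route explicitly, but present the direct substitution as the self-contained proof, since it also identifies the common idempotent as $q=a$.
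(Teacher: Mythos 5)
Your proof is correct and matches the route the paper intends: the corollary is an immediate consequence of the preceding proposition's normal forms $x\circ y=\varphi x-\varphi y+a$ and $x*y=x-y+a$, which give $x\circ x=a=x*x$ upon setting $y=x$, and it also follows a fortiori from Corollary \ref{C-uni}. Both observations are exactly what the paper relies on in leaving the proof to the reader.
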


   \medskip\noindent {\bf 7.}
Will now consider a biquasigroup $(Q,\circ,*)$ satisfying the identity \eqref{e7}, i.e.
$$
(x\circ z)\circ (y* z)=x* y.
$$
            
A simple example of a biquasigroup $(Q,\circ,*)$ satisfying the identity \eqref{e7} is a commutative group $(Q,+)$ with the operations $x\circ y=y-x$ and $x*y=x+y$. This biquasigroup is medial, both quasigroups $(Q,\circ)$ and $(Q,*)$ have left neutral element but only the first is unipotent.

Suppose now that in a biquasigroup $(Q,\circ,*)$ satisfying the identity \eqref{e7} the first quasigroup is medial and the second is idempotent. Then, by Toyoda Theorem (cf.\ \cite{Scerb}), there exists a commutative group $(Q,+)$ and its commuting automorphisms $\varphi,\psi$ such that $x\circ y=\varphi x+\psi y+a$ for some fixed $a\in Q$. Then $x*y=(x\circ y)\circ (y*y)=(x\circ y)\circ y=\varphi^2 x+\varphi\psi y+\psi y+\varphi a+a$. This, by \eqref{e7}, implies $\varphi^2-\varphi=\varepsilon$, $\varphi+\varphi\psi+\psi=0$ and $\varphi a=-a$. Thus $a=0$ and $x*y=\varphi^2 x+\varphi\psi y+\psi y$. Since $(Q,*)$ is idempotent, $\varphi^2+\varphi\psi+\psi=\varepsilon$. Hence $x*y=\varphi^2 x+y-\varphi^2 y=\varphi^2 x-\varphi y$. Consequently $(Q,*)$ is medial. Therefore $(Q,\circ,*)$ is medial too.

\medskip
In this way we have proved
\begin{proposition}
If in a biquasigroup $(Q,\circ,*)$ satisfying the identity \eqref{e7} the first quasigroup is medial and the second is idempotent, then the second is medial too and there exists a commutative group $(Q,+)$ and its commuting automorphisms $\varphi,\psi$ such that $\varphi+\varphi\psi+\psi=0$, $\varphi^2=\varphi+\varepsilon$, $x\circ y=\varphi x+\psi y$ and  $x*y=\varphi^2 x-\varphi y$.
\end{proposition}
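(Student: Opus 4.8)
The plan is to push everything into the additive group provided by the Toyoda Theorem and then read off the required operator relations by comparing coefficients. First I would apply the Toyoda Theorem to the medial quasigroup $(Q,\circ)$: there is a commutative group $(Q,+)$, commuting automorphisms $\varphi,\psi$, and a fixed $a\in Q$ with $x\circ y=\varphi x+\psi y+a$. Commutativity of $(Q,+)$ is handed to us here, so it never has to be argued separately, and every additive expression below may be rearranged freely.

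The next step is to obtain a linear formula for $*$. Since $(Q,*)$ is idempotent, setting $z=y$ in \eqref{e7} and using $y*y=y$ gives $x*y=(x\circ y)\circ y$; expanding twice with the formula for $\circ$ (and using that $\varphi,\psi$ commute) yields $x*y=\varphi^2 x+\varphi\psi y+\psi y+\varphi a+a$. Both operations are now linear, so I would substitute them into \eqref{e7} and expand the left-hand side into a single additive expression in $x,y,z$ together with the constant built from $a$.

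The core of the proof is coefficient matching. As the resulting equality holds for all $x,y,z$ and all maps involved are additive, setting two of the three variables to $0$ isolates the endomorphism attached to the third, and setting all three to $0$ isolates the constant. The coefficient of $z$, which is absent on the right, gives $(\varphi+\varphi\psi+\psi)\psi=0$; the coefficient of $y$ gives $\varphi^2\psi=(\varphi+\varepsilon)\psi$; and the constant gives $\psi(\varphi+\varepsilon)a=0$. Because $\psi$ is an automorphism it is cancellable, so these collapse to $\varphi+\varphi\psi+\psi=0$, $\varphi^2=\varphi+\varepsilon$, and $\varphi a=-a$. This right-cancellation of $\psi$ — the passage from ``holds on every element'' to ``holds as an operator'', followed by invertibility — is the one point I would treat carefully; everything else is bookkeeping in an abelian group.

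Finally I would clean up and invoke Toyoda in the other direction. From $\varphi a=-a$ and $\varphi^2=\varphi+\varepsilon$ one gets $\varphi^2 a=\varphi a+a=0$ while also $\varphi^2 a=-\varphi a=a$, forcing $a=0$; hence $x\circ y=\varphi x+\psi y$ and $x*y=\varphi^2 x+\varphi\psi y+\psi y$. The relation $\varphi+\varphi\psi+\psi=0$ rewrites $\varphi\psi y+\psi y=-\varphi y$, so $x*y=\varphi^2 x-\varphi y$. Since $\varphi^2$ and $-\varphi$ are commuting automorphisms of the commutative group $(Q,+)$, the quasigroup $(Q,*)$ is linear over $(Q,+)$ with commuting automorphisms and is therefore medial by the Toyoda Theorem; consequently $(Q,\circ,*)$ is medial as well.
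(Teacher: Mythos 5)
Your proposal is correct and follows essentially the same route as the paper: apply the Toyoda Theorem to $(Q,\circ)$, use idempotency of $*$ to get $x*y=(x\circ y)\circ y$, substitute into \eqref{e7} and compare coefficients to obtain $\varphi+\varphi\psi+\psi=0$, $\varphi^2=\varphi+\varepsilon$, $\varphi a=-a$ and hence $a=0$. The only (immaterial) difference is that you simplify $x*y$ to $\varphi^2x-\varphi y$ via the relation $\varphi+\varphi\psi+\psi=0$, whereas the paper invokes idempotency of $(Q,*)$ once more; both give the same conclusion.
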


Conversely we have:
\begin{proposition}\label{P71} Let $(Q,+)$ be a commutative group and $\varphi,\psi$ be its commuting automorphisms such that $\varphi+\varphi\psi+\psi=0$ and $\varphi^2=\varphi+\varepsilon$. Then $(Q,\circ,*)$, where $x\circ y=\varphi x+\psi y$ and $x*y=\varphi^2 x-\varphi y$, is a medial biquasigroup satisfying the identity $\eqref{e7}$. 
\end{proposition}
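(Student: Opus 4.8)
The plan is to verify three things in turn: that each of $(Q,\circ)$ and $(Q,*)$ is a quasigroup, that the pair is medial, and that the identity \eqref{e7} holds. The first is immediate: since $\varphi,\psi$ (and hence $\varphi^2$ and $-\varphi$) are automorphisms of the group $(Q,+)$, all four translation maps $y\mapsto x\circ y$, $x\mapsto x\circ y$, $y\mapsto x*y$, $x\mapsto x*y$ are bijections, so both operations are quasigroup operations, each linear over $(Q,+)$. For mediality I would invoke the Toyoda Theorem quoted in the excerpt: $(Q,\circ)$ is linear over the commutative group $(Q,+)$ with the commuting automorphisms $\varphi,\psi$, hence medial; and $(Q,*)$ is linear over the same group with automorphisms $\varphi^2$ and $-\varphi$, which commute because $\varphi$ and $\psi$ do (indeed $\varphi^2(-\varphi)=-\varphi^3=(-\varphi)\varphi^2$), hence $(Q,*)$ is medial too. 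Since both quasigroups are linear over the same commutative group, the biquasigroup is medial in the sense defined in part~1 of the Main Results.

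The substance is checking \eqref{e7}, which I would do by direct substitution. Writing $x\circ z=\varphi x+\psi z$ and $y*z=\varphi^2 y-\varphi z$ and then applying the operation $\circ$ gives
$$
(x\circ z)\circ(y*z)=\varphi(\varphi x+\psi z)+\psi(\varphi^2 y-\varphi z)=\varphi^2 x+\varphi\psi z+\psi\varphi^2 y-\psi\varphi z.
$$
At this point I would use commutativity of $\varphi$ and $\psi$ to rewrite $\psi\varphi z=\varphi\psi z$ and $\psi\varphi^2 y=\varphi^2\psi y$. The two terms $\varphi\psi z$ and $-\varphi\psi z$ then cancel, leaving $\varphi^2 x+\varphi^2\psi y$.

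The whole identity therefore reduces to the single operator relation $\varphi^2\psi=-\varphi$, and this is where the two hypotheses are consumed. Using $\varphi^2=\varphi+\varepsilon$ I would write $\varphi^2\psi=(\varphi+\varepsilon)\psi=\varphi\psi+\psi$, and then the relation $\varphi+\varphi\psi+\psi=0$ gives $\varphi\psi+\psi=-\varphi$. Hence $\varphi^2\psi=-\varphi$, so $\varphi^2\psi y=-\varphi y$ and the left-hand side equals $\varphi^2 x-\varphi y=x*y$, which is exactly \eqref{e7}. There is no genuine obstacle here beyond the bookkeeping; the only points to watch are to apply commutativity before attempting any cancellation, and to note that the computation genuinely needs both hypotheses, combined into the single collapse $\varphi^2\psi=-\varphi$. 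Once that collapse is in hand, the verification of \eqref{e7} — and with it the proposition — is complete.
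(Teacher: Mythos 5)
Your proposal is correct and takes the same route the paper intends: the paper's proof of this proposition is literally the single sentence ``This is a straightforward calculation,'' and your verification (translations are bijections since $\varphi,\psi,\varphi^2,-\varphi$ are automorphisms; mediality via Toyoda; and the substitution reducing \eqref{e7} to the operator identity $\varphi^2\psi=-\varphi$, which follows from $\varphi^2=\varphi+\varepsilon$ and $\varphi+\varphi\psi+\psi=0$) is exactly the omitted computation, carried out correctly.
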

\begin{proof} 
This is a straightforward calculation.  
\end{proof}

As a consequence of the above results we obtain

\begin{corollary}\label{C70} An idempotent medial biquasigroup $(Q,\circ,*)$ satisfies the identity $\eqref{e7}$ if and only if there exist a commutative group $(Q,+)$ and its automorphism $\varphi$ such that $x\circ y=\varphi x+y-\varphi y$, $x*y=\varphi^2 x-\varphi y$ and $\varphi^2=\varphi+\varepsilon$. 
\end{corollary}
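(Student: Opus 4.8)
The plan is to reduce both implications to the two propositions immediately preceding the statement, using the idempotency of $(Q,\circ)$ as the single extra constraint that specializes the parameter $\psi$ to $\varepsilon-\varphi$.

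For the ``only if'' direction, I would first observe that since $(Q,\circ,*)$ is a medial biquasigroup, both $(Q,\circ)$ and $(Q,*)$ are medial and linear over a common commutative group, and since it is idempotent, both quasigroups are idempotent; in particular $(Q,\circ)$ is medial and $(Q,*)$ is idempotent. This is exactly the hypothesis of the proposition preceding Proposition~\ref{P71}, which I would invoke to obtain a commutative group $(Q,+)$ and commuting automorphisms $\varphi,\psi$ with $\varphi+\varphi\psi+\psi=0$ and $\varphi^2=\varphi+\varepsilon$, together with $x\circ y=\varphi x+\psi y$ and $x*y=\varphi^2 x-\varphi y$. The new input is the idempotency of $(Q,\circ)$: from $x=x\circ x=\varphi x+\psi x$ for all $x$ I read off $\varphi+\psi=\varepsilon$, i.e. $\psi=\varepsilon-\varphi$. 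Substituting this into $x\circ y=\varphi x+\psi y$ gives $x\circ y=\varphi x+y-\varphi y$, while $x*y=\varphi^2 x-\varphi y$ and $\varphi^2=\varphi+\varepsilon$ are already in hand. As a consistency check, feeding $\psi=\varepsilon-\varphi$ into $\varphi+\varphi\psi+\psi=0$ collapses precisely to $\varphi^2=\varphi+\varepsilon$, so no information is lost.

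For the ``if'' direction, I would run Proposition~\ref{P71} in the opposite order. Starting from $(Q,+)$ commutative and an automorphism $\varphi$ with $\varphi^2=\varphi+\varepsilon$, set $\psi:=\varepsilon-\varphi$, so that $x\circ y=\varphi x+y-\varphi y=\varphi x+\psi y$. Before applying Proposition~\ref{P71} I must verify its hypotheses: that $\psi$ is an automorphism, that $\varphi,\psi$ commute, and that $\varphi+\varphi\psi+\psi=0$. Commutativity is immediate since $\psi$ is a polynomial in $\varphi$, and $\varphi+\varphi\psi+\psi=\varphi+(\varphi-\varphi^2)+(\varepsilon-\varphi)=\varepsilon+\varphi-\varphi^2=0$ by $\varphi^2=\varphi+\varepsilon$. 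Proposition~\ref{P71} then yields at once that $(Q,\circ,*)$ is a medial biquasigroup satisfying \eqref{e7}. It remains to record idempotency: $x\circ x=\varphi x+x-\varphi x=x$ and $x*x=(\varphi^2-\varphi)x=\varepsilon x=x$, the latter using $\varphi^2-\varphi=\varepsilon$, so the biquasigroup is idempotent.

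The one step that is not purely formal is confirming that $\psi=\varepsilon-\varphi$ is a genuine bijection, since Proposition~\ref{P71} demands $\psi$ be an automorphism and not merely an endomorphism of $(Q,+)$. I expect this to be the crux, but it is dispatched by the computation $\varphi\psi=\varphi(\varepsilon-\varphi)=\varphi-\varphi^2=-\varepsilon$, whence $\psi=-\varphi^{-1}$ is invertible. With that in place both directions close and the corollary follows.
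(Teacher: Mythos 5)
Your proposal is correct and follows exactly the route the paper intends: the corollary is stated there as ``a consequence of the above results,'' and you supply the missing details by specializing the two preceding propositions via $\psi=\varepsilon-\varphi$, including the one genuinely non-formal point, namely that $\varphi\psi=-\varepsilon$ forces $\psi$ to be a bijection.
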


In the case of quasigroups induced by the group $\mathbb{Z}_n$ we have stronger result. For simplicity the value of the integer $t\geqslant 0$ modulo $n$ will be denoted by $[t]_n$. 

\begin{corollary}\label{C71} An idempotent medial biquasigroup induced by the group $\mathbb{Z}_n$ satisfies the identity $\eqref{e7}$ if and only if has the form $(\mathbb{Z}_n,\circ,*)$, where  $x\circ y=[ax+(1-a)y]_n$, $\,x*y=[a^2x+(1-a^2)y]_n$ and $[a^2-a]_n=1$.
\end{corollary}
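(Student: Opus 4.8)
The plan is to read off Corollary \ref{C71} from Corollary \ref{C70} by specializing the abstract group $(Q,+)$ to $(\mathbb{Z}_n,+)$ and making the automorphism explicit. The one external fact I need is the standard description $\mathrm{Aut}(\mathbb{Z}_n)=\{\,x\mapsto[ax]_n : \gcd(a,n)=1\,\}$, so that every automorphism $\varphi$ of $(\mathbb{Z}_n,+)$ is multiplication by a unit $a$, and $\varphi^2$ is multiplication by $a^2$.

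For the ``only if'' direction I would start from an idempotent medial biquasigroup $(\mathbb{Z}_n,\circ,*)$ satisfying \eqref{e7}. Since it is induced by $(\mathbb{Z}_n,+)$, Corollary \ref{C70} applies with this group and furnishes an automorphism $\varphi$ with $x\circ y=\varphi x+y-\varphi y$, $x*y=\varphi^2 x-\varphi y$ and $\varphi^2=\varphi+\varepsilon$. Writing $\varphi x=[ax]_n$ these become $x\circ y=[ax+(1-a)y]_n$ and $x*y=[a^2x-ay]_n$, while the operator equation $\varphi^2=\varphi+\varepsilon$ is exactly the congruence $a^2\equiv a+1\pmod n$, i.e. $[a^2-a]_n=1$. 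Using this congruence to replace $-a$ by $1-a^2$ then rewrites the product as $x*y=[a^2x+(1-a^2)y]_n$, giving the announced form.

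For the ``if'' direction I would run the computation backwards, and here lies the only point deserving care --- and the reason the statement over $\mathbb{Z}_n$ is sharper than Corollary \ref{C70}. Given the stated operations with $[a^2-a]_n=1$, this single congruence reads $a(a-1)\equiv 1\pmod n$, so $a-1$ is a multiplicative inverse of $a$ and therefore $\gcd(a,n)=1$ is automatic; no invertibility of $a$ has to be postulated. Consequently $\varphi x=[ax]_n$ is a genuine automorphism of $(\mathbb{Z}_n,+)$, it satisfies $\varphi^2=\varphi+\varepsilon$ by the same congruence, and reversing the substitution ($1-a^2\equiv-a$) recovers $x\circ y=\varphi x+y-\varphi y$ and $x*y=\varphi^2 x-\varphi y$. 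The ``if'' half of Corollary \ref{C70} then certifies that $(\mathbb{Z}_n,\circ,*)$ is an idempotent medial biquasigroup satisfying \eqref{e7}.

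I do not expect a real obstacle: once Corollary \ref{C70} is in hand the argument is bookkeeping, and the only genuinely new content is the observation that $[a^2-a]_n=1$ by itself guarantees $a\in\mathbb{Z}_n^{*}$, which is precisely what lets a single numerical condition replace the pair ``$\varphi$ is an automorphism and $\varphi^2=\varphi+\varepsilon$.''
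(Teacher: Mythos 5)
Your proposal is correct and matches the paper's (implicit) argument: the paper states Corollary \ref{C71} as an immediate specialization of Corollary \ref{C70} to $(\mathbb{Z}_n,+)$, using exactly the identification of automorphisms with multiplication by units and the rewriting $-a\equiv 1-a^2$ forced by $[a^2-a]_n=1$. Your added observation that $[a^2-a]_n=1$ already forces $\gcd(a,n)=1$ is a correct and worthwhile point that the paper leaves unstated.
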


\begin{corollary}\label{C72} For every $a\geqslant 3$ there is an idempotent medial biquasigroup of order $n=a^2-a-1$ satisfying $\eqref{e7}$. It has the form $(\mathbb{Z}_n,\circ,*)$, where $x\circ y=[ax+(1-a)y]_n$ and $x*y=[(a+1)x-ay]_n$, or $x\circ y=[(1-a)x+ay]_n$ and $x*y=[(2-a)x+(a-1)y]_n$.
\end{corollary}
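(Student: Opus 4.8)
The plan is to obtain both biquasigroups as direct instances of Corollary \ref{C71}. The guiding observation is that the defining relation $n=a^2-a-1$ is engineered precisely to meet the congruence hypothesis of that corollary: since $a^2-a=n+1$, we have $[a^2-a]_n=1$ immediately. Thus for every $a\geqslant 3$ — for which $n=a^2-a-1\geqslant 5$, so that $|Q|>1$ and the model is non-trivial — the condition $[a^2-a]_n=1$ of Corollary \ref{C71} holds automatically, and no further searching is required.

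Before invoking Corollary \ref{C71} I would verify that $x\mapsto[ax]_n$ is genuinely an automorphism of $\mathbb{Z}_n$, i.e. that $a$ and $1-a$ are units modulo $n$, since the corollary presupposes a biquasigroup truly linear over $\mathbb{Z}_n$. Writing $n=a(a-1)-1$ gives $n\equiv-1\pmod a$ and $n\equiv-1\pmod{a-1}$, whence $\gcd(a,n)=\gcd(a-1,n)=1$. With these units in hand, Corollary \ref{C71} yields the idempotent medial biquasigroup $x\circ y=[ax+(1-a)y]_n$, $x*y=[a^2x+(1-a^2)y]_n$ satisfying \eqref{e7}. The first stated form then follows simply by reducing the coefficients of $*$ through $a^2\equiv a+1\pmod n$ (equivalently $1-a^2\equiv-a$), giving $x*y=[(a+1)x-ay]_n$.

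For the second form I would exploit the fact that the congruence hypothesis of Corollary \ref{C71} is exactly the requirement that the multiplier be a root of $t^2-t-1\equiv 0\pmod n$. Since $a$ is one root and the two roots of this quadratic sum to $1$, the companion root is $1-a$; indeed $(1-a)^2-(1-a)-1=a^2-a-1=n\equiv 0\pmod n$. As $1-a$ is likewise a unit modulo $n$ (already shown), Corollary \ref{C71} applies again with multiplier $1-a$ in place of $a$, producing $x\circ y=[(1-a)x+ay]_n$ and $x*y=[(1-a)^2x+(1-(1-a)^2)y]_n$; reducing via $(1-a)^2\equiv(1-a)+1=2-a\pmod n$ delivers the second stated form $x*y=[(2-a)x+(a-1)y]_n$.

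I expect no genuine obstacle beyond careful bookkeeping. The one point that repays attention is the automorphism (unit) condition: because Corollary \ref{C71} is stated for biquasigroups that are actually linear over $\mathbb{Z}_n$, I would be sure to confirm invertibility modulo $n$ for both multipliers $a$ and $1-a$ rather than take it for granted, after which the two families drop out of the quadratic $t^2-t-1\equiv 0\pmod n$ and its pair of conjugate roots.
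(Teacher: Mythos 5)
Your proposal is correct and follows exactly the route the paper intends (the paper states Corollary \ref{C72} without proof as a direct specialization of Corollary \ref{C71}): the relation $n=a^2-a-1$ gives $[a^2-a]_n=1$ at once, the unit conditions on $a$ and $1-a$ follow (indeed $a(a-1)=n+1\equiv 1$ already makes them mutually inverse units), and the two displayed forms come from the two roots $a$ and $1-a$ of $t^2-t-1\equiv 0\pmod n$ after reducing $a^2\equiv a+1$ and $(1-a)^2\equiv 2-a$. Nothing is missing.
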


\begin{proposition}
A medial biquasigroup $(Q,\circ,*)$ satisfies the identity $\eqref{e7}$ if and only if there exist a commutative group $(Q,+)$ and its commuting automorphisms $\varphi,\psi$ such that  $x\circ y=\varphi x+\psi y+c$, $x*y=\varphi^2 x-\varphi y+d$, $\varphi\psi+\varepsilon=0$ and $\varphi c+\psi d+c=d$ for some fixed $c,d\in Q$. 
\end{proposition}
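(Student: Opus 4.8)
The plan is to use the definition of a medial biquasigroup directly. Since $(Q,\circ,*)$ is medial, both $(Q,\circ)$ and $(Q,*)$ are linear over one and the same commutative group $(Q,+)$ with commuting automorphisms (Toyoda). So I would begin by writing $x\circ y=\varphi x+\psi y+c$ and $x*y=\alpha x+\beta y+d$, where $\varphi\psi=\psi\varphi$, $\alpha\beta=\beta\alpha$ and $c,d\in Q$ are fixed, and then substitute these into \eqref{e7} to read off conditions on the automorphisms.

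First I would expand both sides. On the left, writing $y*z=\alpha y+\beta z+d$ and $x\circ z=\varphi x+\psi z+c$ and applying $\circ$ once more gives
\[
(x\circ z)\circ(y*z)=\varphi^2x+\psi\alpha y+(\varphi\psi+\psi\beta)z+(\varphi c+\psi d+c),
\]
while the right-hand side is simply $x*y=\alpha x+\beta y+d$. Because $(Q,+)$ is abelian and $\varphi,\psi,\alpha,\beta$ are endomorphisms, the difference of the two sides is affine in each of $x,y,z$ separately. Setting $x=y=z=0$ isolates the constant relation $\varphi c+\psi d+c=d$, and then letting each variable vary alone forces the three endomorphism identities $\alpha=\varphi^2$, $\beta=\psi\alpha$ and $\varphi\psi+\psi\beta=0$ (each asserting that an endomorphism vanishes identically, hence is the zero map).

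The only genuinely algebraic step comes next. Substituting $\beta=\psi\alpha=\psi\varphi^2$ into $\varphi\psi+\psi\beta=0$ and using $\varphi\psi=\psi\varphi$ turns this relation into $\varphi\psi+(\varphi\psi)^2=0$, i.e. $\varphi\psi(\varepsilon+\varphi\psi)=0$; since $\varphi\psi$ is an automorphism it is cancellable, so $\varphi\psi+\varepsilon=0$. From $\varphi\psi=-\varepsilon$ together with the commuting relation $\psi\varphi=-\varepsilon$ I then obtain $\beta=\psi\varphi^2=-\varphi$ and $\alpha=\varphi^2$, yielding exactly $x*y=\varphi^2x-\varphi y+d$ and the surviving conditions $\varphi\psi+\varepsilon=0$ and $\varphi c+\psi d+c=d$. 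The point to handle with care here is the cancellation of the invertible map $\varphi\psi$; everything else is routine bookkeeping of compositions.

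For the converse I would simply insert $x\circ y=\varphi x+\psi y+c$ and $x*y=\varphi^2x-\varphi y+d$ into \eqref{e7} and reduce using $\varphi\psi=\psi\varphi=-\varepsilon$ and $\varphi c+\psi d+c=d$: the coefficient of $z$ collapses to $\varphi\psi z-\psi\varphi z=-z+z=0$, the coefficient of $y$ becomes $\psi\varphi^2 y=-\varphi y$, and the constant term reduces to $d$, reproducing $x*y$. Since this is a direct calculation, I would state it in a sentence and omit the routine details.
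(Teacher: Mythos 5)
Your proof is correct and takes essentially the approach the paper itself uses for the general linear case (Theorem \ref{T27}): write both operations in Toyoda form over the same commutative group, expand \eqref{e7}, isolate the constant term and the affine parts in $x$, $y$, $z$ to get $\alpha=\varphi^2$, $\beta=\psi\varphi^2$, $\varphi\psi+\psi\beta=0$ and $\varphi c+\psi d+c=d$, and then use commutativity of $\varphi$ and $\psi$ to rewrite the third relation as $\varphi\psi(\varepsilon+\varphi\psi)=0$ and cancel the automorphism $\varphi\psi$ to obtain $\varphi\psi=-\varepsilon$ and $\beta=-\varphi$. The paper states this proposition without an explicit proof, and your computation (including the routine converse) supplies exactly the specialization of the linear case that it implicitly relies on.
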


\begin{proposition}
A medial biquasigroup $(\mathbb{Z}_n,\circ,*)$ satisfies the identity $\eqref{e7}$ if and only if there exists $a,b,c,d\in\mathbb{Z}_n$ such that $[ab+1]_n=0$, $[ac+bd+c]_n=d$, $x\circ y=[ax+by+c]_n$ and $x*y=[a^2x-ay+d])_n$.
\end{proposition}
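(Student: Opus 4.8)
The plan is to specialize the immediately preceding proposition (the general characterization of medial biquasigroups satisfying $\eqref{e7}$) to the case $(Q,+)=\mathbb{Z}_n$, using the standard description of the automorphisms of a cyclic group. First I would recall that every automorphism of $\mathbb{Z}_n$ has the form $x\mapsto [tx]_n$ for some unit $t$ modulo $n$ (that is, $\gcd(t,n)=1$), and conversely each such multiplication is an automorphism. Moreover the automorphism group of $\mathbb{Z}_n$ is isomorphic to the unit group $(\mathbb{Z}/n\mathbb{Z})^{*}$, which is abelian, so \emph{any} two automorphisms of $\mathbb{Z}_n$ commute automatically. Consequently the hypothesis ``commuting automorphisms $\varphi,\psi$'' in the preceding proposition imposes no extra restriction here and requires no separate verification.

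For the ``only if'' direction I would argue as follows. If $(\mathbb{Z}_n,\circ,*)$ is a medial biquasigroup satisfying $\eqref{e7}$, then by the previous proposition there are commuting automorphisms $\varphi,\psi$ of $\mathbb{Z}_n$ and elements $c,d$ with $x\circ y=\varphi x+\psi y+c$, $x*y=\varphi^2 x-\varphi y+d$, $\varphi\psi+\varepsilon=0$ and $\varphi c+\psi d+c=d$. Writing $\varphi x=[ax]_n$ and $\psi y=[by]_n$ for the corresponding units $a,b$, the two operations become $x\circ y=[ax+by+c]_n$ and $x*y=[a^2x-ay+d]_n$, since $\varphi^2$ is multiplication by $a^2$. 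The relation $\varphi\psi+\varepsilon=0$ says that multiplication by $ab+1$ is the zero map, i.e.\ $[ab+1]_n=0$, and $\varphi c+\psi d+c=d$ becomes $ac+bd+c\equiv d\pmod n$, i.e.\ $[ac+bd+c]_n=d$.

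For the converse, suppose $a,b,c,d\in\mathbb{Z}_n$ satisfy $[ab+1]_n=0$ and $[ac+bd+c]_n=d$. The key observation is that the first condition already forces $a$ and $b$ to be units: from $ab\equiv-1\pmod n$ we get $a\cdot(-b)\equiv 1\pmod n$, so $a$ is invertible with inverse $-b$, and symmetrically $b$ is invertible. Hence $\varphi x=[ax]_n$ and $\psi y=[by]_n$ are genuine (and, as noted, commuting) automorphisms of $\mathbb{Z}_n$, and the hypotheses translate back into $\varphi\psi+\varepsilon=0$ and $\varphi c+\psi d+c=d$. The ``if'' direction of the preceding proposition then shows that $(\mathbb{Z}_n,\circ,*)$ with $x\circ y=[ax+by+c]_n$ and $x*y=[a^2x-ay+d]_n$ is a medial biquasigroup satisfying $\eqref{e7}$.

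The computations here are entirely routine; the only point requiring care is the converse, where one must notice that the invertibility of $\varphi$ and $\psi$ needed to apply the previous proposition is \emph{not} an additional hypothesis but is automatically supplied by $[ab+1]_n=0$. This is the step I would flag as the main (if modest) obstacle, since without it one might be tempted to add a spurious requirement that $a$ and $b$ be coprime to $n$.
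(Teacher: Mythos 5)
Your proof is correct and follows exactly the route the paper intends: the paper states this proposition without proof as an immediate specialization of the preceding general proposition to $(Q,+)=\mathbb{Z}_n$, using the fact that automorphisms of $\mathbb{Z}_n$ are multiplications by units (which automatically commute). Your additional observation that $[ab+1]_n=0$ already forces $a$ and $b$ to be units is a nice justification for why no coprimality hypothesis appears in the statement.
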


For linear biquasigroup we have the following result. 
\begin{theorem}\label{T27}
A biquasigroup $(Q,\circ,*)$ linear over a group $(Q,+)$ satisfies the identity \eqref{e7} if and only if $(Q,+)$ is a commutative group,  
$x\circ y=\varphi x+\psi y+a$, \ $x*y=\varphi^2 x +\psi\varphi^2 y+b,$ \ $\varphi\psi+\psi^2\varphi^2=0$ and $\varphi a+a+\psi b=b$. 
\end{theorem}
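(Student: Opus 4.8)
The plan is to substitute the linear forms $x\circ y=\varphi x+a+\psi y$ and $x*y=\alpha x+b+\beta y$ into \eqref{e7} and to extract constraints on $\varphi,\psi,\alpha,\beta$ and $a,b$ by specializing $x,y,z$, exactly in the spirit of the earlier linear cases. Expanding $(x\circ z)\circ(y*z)=x*y$ gives
$$\varphi^2 x+\varphi a+\varphi\psi z+a+\psi\alpha y+\psi b+\psi\beta z=\alpha x+b+\beta y \qquad (\star)$$
for all $x,y,z\in Q$. Putting $x=y=z=0$ in $(\star)$ yields at once the constant relation $\varphi a+a+\psi b=b$. Next, setting $y=z=0$ and replacing the contiguous block $\varphi a+a+\psi b$ by $b$ reduces $(\star)$ to $\varphi^2 x+b=\alpha x+b$, and right-cancelling $b$ gives $\alpha=\varphi^2$.

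The main obstacle is that $(Q,+)$ is not assumed commutative, so I cannot simply read off the `coefficients' of $x,y,z$ in $(\star)$: the terms $\varphi\psi z$ and $\psi\alpha y$ are wedged between constants and do not cancel contiguously. I therefore plan to prove abelianness first. Comparing $(\star)$ for an arbitrary $z$ with $(\star)$ at $z=0$ (same $x,y$) and left-cancelling the common prefix $\varphi^2 x+\varphi a$ leaves
$$\varphi\psi z+d+\psi\beta z=d,\qquad d:=a+\psi\alpha y+\psi b.$$
Since $\psi\alpha$ is bijective and left/right translations are bijections, $d$ runs over all of $Q$ as $y$ varies, so this identity holds for every $d,z\in Q$. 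Taking $d=0$ gives $\varphi\psi z=-\psi\beta z$; feeding this back shows $-\psi\beta z+d+\psi\beta z=d$ for all $d$, i.e.\ $\psi\beta z\in Z(Q,+)$, and as $\psi\beta$ is onto this forces $Z(Q,+)=Q$. Hence $(Q,+)$ is commutative.

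Once commutativity is available the remainder is routine. Rewriting $(\star)$ as $\varphi^2 x+\psi\alpha y+(\varphi\psi+\psi\beta)z+(\varphi a+a+\psi b)=\alpha x+\beta y+b$ and comparing the now genuinely separable contributions of $x$, $y$, $z$ and the constant term gives $\alpha=\varphi^2$, $\beta=\psi\alpha=\psi\varphi^2$, $\varphi\psi+\psi\beta=0$ and $\varphi a+a+\psi b=b$; substituting $\beta=\psi\varphi^2$ converts the third relation into $\varphi\psi+\psi^2\varphi^2=0$, which are precisely the stated conditions (and, since $+$ is abelian, the position of the constants $a,b$ is immaterial, so equivalently $x\circ y=\varphi x+\psi y+a$ and $x*y=\varphi^2 x+\psi\varphi^2 y+b$). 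For the converse I would substitute these two forms into $(x\circ z)\circ(y*z)$, expand, use $\varphi\psi+\psi^2\varphi^2=0$ to annihilate the $z$-term and $\varphi a+a+\psi b=b$ to reduce the constant, and thereby recover $x*y$; this is a direct verification with no difficulty.
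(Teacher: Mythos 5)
Your proposal is correct and follows essentially the same route as the paper: extract $\varphi a+a+\psi b=b$ and $\alpha=\varphi^2$ by specialization, reduce the identity to the ``sandwich'' relation $\varphi\psi z+v+\psi\beta z=v$ with $v$ ranging over all of $Q$, deduce $\varphi\psi=-\psi\beta$ and hence that $\psi\beta z$ is central so $(Q,+)$ is commutative, and then read off $\beta=\psi\varphi^2$ and $\varphi\psi+\psi^2\varphi^2=0$. Your way of reaching the sandwich relation (differencing the identity against its $z=0$ specialization and left-cancelling the common prefix) is a slightly cleaner bookkeeping of the same step the paper performs via the intermediate formula $\psi\alpha y=\psi b+\beta y-\psi b$.
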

\begin{proof}
If a biquasigroup $(Q,\circ,*)$ linear over a group $(Q,+)$ satisfies \eqref{e7}, then  
$\varphi a+a+\psi b=b$ and $\varphi^2=\alpha$. Thus \eqref{e7} can be reduced to     
$$
\varphi a+\varphi\psi z+a+\psi\alpha y+\psi b+\psi\beta z=b+\beta y,
$$
which for $z=0$ gives $\varphi a+a+\psi\alpha y+\psi b=b+\beta y=(\varphi a+a+\psi b)+\beta y$. Hence
$\psi\alpha y+\psi b=\psi b+\beta y$. So $\psi\alpha y=\psi b+\beta y -\psi b$. Therefore the previous identity implies $\varphi\psi z+a+\psi b+\beta y +\psi\beta z=a+\psi b+\beta y$. Since every element $v\in Q$ can be presented in the form $v=a+\psi b+\beta y$, the last identity means that $\varphi\psi z+v+\psi\beta z=v$ for all $v,z\in Q$. This implies $\varphi\psi=-\psi\beta$. Hence $\varphi\psi z+v=v-\psi\beta z=v+\varphi\psi z$. So, $(Q,+)$ is commutative. Applying these facts to \eqref{e7} we can see that $\beta=\psi\alpha$. Hence $x\circ y=\varphi x+\psi y+a$ and $x*y=\varphi^2 x +\psi\varphi^2 y+b.$

The proof of the converse follows from a direct calculation and is omitted.
\end{proof}

\bigskip\noindent
{\bf 8.} Will now consider a biquasigroup $(Q,\circ,*)$ satisfying the identity \eqref{e8}, i.e.
$$
(x\circ z)* (y* z)=x\circ y.
$$
\begin{proposition}
If a biquasigroup $(Q,\circ,*)$ satisfies \eqref{e8}, then $(Q,*)$ has no more than one idempotent. If such idempotent exists then it is a right neutral element of a quasigroup $(Q,*)$. 
Moreover, if $x\circ x=u$ for some $u\in Q$ and all $x\in Q$, then 
$x\circ y=x*(y*x)$, $x*x=w$ and $w\circ u=w$ for all $x,y\in Q$.
\end{proposition}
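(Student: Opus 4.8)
The plan is to mine the defining identity \eqref{e8} by substituting chosen values of the variables and then cancelling, which is legitimate throughout because $(Q,\circ)$ and $(Q,*)$ are quasigroups. The single most productive substitution is $y=z$, and I would carry it out first, since it settles both assertions of the first part at once and, as a bonus, shows that $(Q,*)$ is always unipotent.

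Setting $y=z$ in \eqref{e8} gives $(x\circ z)*(z*z)=x\circ z$ for all $x,z\in Q$. Fixing $z$ and letting $x$ range over $Q$, the right translation $x\mapsto x\circ z$ is a bijection of $Q$, so $v*(z*z)=v$ for every $v\in Q$; that is, $z*z$ is a right neutral element of $(Q,*)$. A quasigroup has at most one right neutral element (if $v*e=v=v*e'$ for all $v$, left cancellation gives $e=e'$), so $z*z$ is independent of $z$; call it $w$. Thus $(Q,*)$ is unipotent with $x*x=w$, and $w$ is right neutral in $(Q,*)$. Taking $v=w$ gives $w*w=w$, so $w$ is idempotent; and if $a*a=a$ is any idempotent then $a=a*a=w$. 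Hence $w$ is the only idempotent and it is the right neutral element, which proves the first two assertions, and it already yields the relation $x*x=w$ of the last part.

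For the ``moreover'' part I would bring in the hypothesis $x\circ x=u$. The relation $w\circ u=w$ I expect to obtain directly from the single substitution $(x,y,z)=(w,u,w)$ in \eqref{e8}: the left-hand side becomes $(w\circ w)*(u*w)$, where $w\circ w=u$ by unipotency of $(Q,\circ)$ and $u*w=u$ by right neutrality of $w$, so it equals $u*u=w$ by unipotency of $(Q,*)$, while the right-hand side is $w\circ u$. This gives $w\circ u=w$ immediately.

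The remaining target is the stated identity $x\circ y=x*(y*x)$, and this is the step I expect to be the main obstacle. The natural route is to substitute $z=x$ into \eqref{e8}: this produces exactly the inner factor $y*x$ demanded by the stated form and collapses the outer factor $x\circ z$ to the $\circ$-square $x\circ x$, which by unipotency is $u$. The delicate point is then to identify that outer factor with $x$ itself, as the stated form requires. The difficulty is structural: the substitution that fixes the inner factor as $y*x$ forces the outer factor to be the common square $u=x\circ x$, whereas the only substitution making the outer factor literally $x$, namely $z=\overline{x}$ with $x\circ\overline{x}=x$, distorts the inner factor to $y*\overline{x}$. Reconciling both factors simultaneously to the exact shape $x*(y*x)$ is therefore where the argument requires the most care, and I would concentrate the work there, using the already-established unipotency of both operations and the right-neutrality of $w$ to control the outer factor.
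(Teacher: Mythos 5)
Your handling of the first part is correct and in fact proves something stronger than the paper does: by putting $y=z$ in \eqref{e8} and using surjectivity of the right translation $x\mapsto x\circ z$, you show that $z*z$ is a right neutral element for every $z$, hence that $(Q,*)$ is \emph{always} unipotent with its square $w$ as the unique idempotent and right neutral element. The paper only substitutes $y=z=e$ for an assumed idempotent $e$, so it obtains unipotency of $(Q,*)$ only under the extra hypothesis $x\circ x=u$; your route subsumes both of its first two claims and part of the third. Your derivations of $x*x=w$ and of $w\circ u=w$ coincide with the paper's.

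The step you flagged as the main obstacle is not a gap in your argument but an error in the statement. The substitution $z=x$ gives exactly $x\circ y=(x\circ x)*(y*x)=u*(y*x)$, and that is all the paper's own proof establishes; the printed identity $x\circ y=x*(y*x)$ is evidently a misprint for $x\circ y=u*(y*x)$. The literal version is false: take $(Q,+)=\mathbb{Z}_3$ with $x\circ y=x*y=x-y$, which satisfies \eqref{e8} with $u=w=0$; then $x*(y*x)=2x-y\neq x-y=x\circ y$ for $x=1$, $y=0$, whereas $u*(y*x)=-(y-x)=x-y$ as required. So you should stop looking for a way to replace the outer factor $u$ by $x$ — it cannot be done — and simply record $x\circ y=u*(y*x)$ as the correct conclusion.
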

\begin{proof}
Let $e*e=e$. Since $(Q,\circ)$ is a quasigroup, each $z\in Q$ can be expressed in the form $z=x\circ e$. Thus $z=x\circ e=(x\circ e)*(e*e)=z*e$, so $e$ is a right neutral element of $(Q,*)$. 
If $\bar{e}$ is the second idempotent of $(Q,*)$, then $\bar{e}*e=\bar{e}=\bar{e}*\bar{e}$. Therefore $\bar{e}=e$, so $(Q,*)$ has no more than one idempotent. If $x\circ x=u$ for all $x\in Q$, then, by \eqref{e8}, $u*(x*x)=(x\circ x)*(x*x)=x\circ x=u$. Analogously $u*(y*y)=u$. Thus, $x*x=y*y=w$  for some $w\in Q$, i.e. $(Q,*)$ is unipotent and $w$ is its right neutral element. Then, $x\circ y=(x\circ x)*(y*x)=u*(y*x)$ and $w\circ u=(w\circ w)*(u*w)=u*u=w$. 
\end{proof}

Let $(Q,\circ,*)$ be linear over a group $(Q,+)$.  If it satisfies \eqref{e8}, then 
$\alpha a+b+\beta b=a$, \ $\alpha=\varepsilon$ and $\beta b=-b$. Thus \eqref{e8} can be reduced to
\begin{equation}\label{e81}
\psi z+b+\beta y+\beta b+\beta^2 z=\psi y.
\end{equation}
This for $y=0$ gives $\psi z+\beta^2 z=0$. So, $\psi=-\beta^2$ and $\psi b=-b=\beta b$. 

Now putting  $y=b$ in \eqref{e81} we obtain $\psi z+b+\beta b+\beta b+\beta^2 z=\psi b$, i.e. $-\beta^2 z+\beta b+\beta^2 z=\psi b=\beta b$. So, $\beta b+\beta^2 z=\beta^2 z+\beta b$. This means that $b$ is in the center of $(Q,+)$.  Thus putting $z=0$ in \eqref{e81} and using the above facts, we obtain  $\beta=\psi=-\beta^2$. Hence $\beta=-\varepsilon$. So $(Q,+)$ is commutative, $x\circ y=\varphi x-y+a$ and $x*y=x-y+b$.

In this way, we have proved the “only if” part of Theorem \ref{T28} below. The proof of the converse part of Theorem \ref{T28} follows from a direct calculation and is omitted.  
\begin{theorem}\label{T28}
A biquasigroup $(Q,\circ,*)$ linear over a group $(Q,+)$ satisfies \eqref{e8} if and only if $(Q,+)$ is commutative, \ $x\circ y=\varphi x-y+a$ and $x*y=x-y+b$.
\end{theorem}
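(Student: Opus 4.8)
The plan is to substitute the linear expressions $x\circ y=\varphi x+a+\psi y$ and $x*y=\alpha x+b+\beta y$ into \eqref{e8} and then extract relations among the automorphisms $\varphi,\psi,\alpha,\beta$ and the constants $a,b$ by specializing the three variables one at a time. Expanding the left-hand side gives
$$
(x\circ z)*(y*z)=\alpha\varphi x+\alpha a+\alpha\psi z+b+\beta\alpha y+\beta b+\beta^2 z,
$$
and this must equal $\varphi x+a+\psi y$ for all $x,y,z\in Q$. Since commutativity of $(Q,+)$ is not yet available, I will choose substitutions that permit clean left- or right-cancellation in the group rather than freely rearranging summands.

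First I would set $x=y=z=0$ to read off the constant relation $\alpha a+b+\beta b=a$. Putting $y=z=0$ and cancelling the constant on the right by means of this relation isolates the $x$-term as $\alpha\varphi x=\varphi x$, so surjectivity of $\varphi$ forces $\alpha=\varepsilon$; the constant relation then collapses to $\beta b=-b$. With $\alpha=\varepsilon$ I can left-cancel the block $\varphi x+a$ and reduce \eqref{e8} to the identity \eqref{e81} in the variables $y$ and $z$ alone. Taking $y=0$ in \eqref{e81} yields $\psi z+\beta^2 z=0$, that is $\psi=-\beta^2$, and hence $\psi b=-b=\beta b$.

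The decisive moves are the last two specializations. Substituting $y=b$ in \eqref{e81} and simplifying with $\beta b=-b$ and $\psi=-\beta^2$ leaves $\beta b+\beta^2 z=\beta^2 z+\beta b$; since $\beta^2$ is onto, this says that $\beta b$, hence $b$, lies in the center of $(Q,+)$. Then $z=0$ in \eqref{e81}, using centrality of $b$ together with $\beta b=-b$, gives $\beta y=\psi y$, so $\beta=\psi=-\beta^2$; feeding this relation back through the surjectivity of $\beta$ produces $\beta=-\varepsilon$. But the inversion map $y\mapsto -y$ is an automorphism exactly when $(Q,+)$ is commutative, so commutativity falls out at this very point, and $\psi=-\varepsilon$ as well. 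Reading off the operations in the now-commutative group gives $x\circ y=\varphi x-y+a$ and $x*y=x-y+b$, which establishes the ``only if'' direction.

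I expect the main obstacle to be precisely this work inside a not-yet-commutative group: every cancellation must respect the order of the summands, and the constants $a,b$ sitting between the $\varphi x$ and $\psi y$ blocks cannot be slid past the variable terms until the final step. The conceptual payoff is that the single relation $\beta=-\varepsilon$ both pins down $\beta$ and forces commutativity at once. The converse is then a routine verification: inserting the stated forms into \eqref{e8} in a commutative group, the $\pm z$ and $\pm b$ contributions cancel and both sides reduce to $\varphi x-y+a$, so the identity holds and both $\circ$ and $*$ are genuine quasigroup operations.
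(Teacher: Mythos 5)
Your proposal is correct and follows essentially the same route as the paper: derive $\alpha a+b+\beta b=a$, $\alpha=\varepsilon$ and $\beta b=-b$, reduce \eqref{e8} to \eqref{e81}, then specialize $y=0$ to get $\psi=-\beta^2$, $y=b$ to place $b$ in the center, and $z=0$ to force $\beta=\psi=-\beta^2$, hence $\beta=-\varepsilon$ and commutativity. Your added care about ordering of summands in the not-yet-commutative group, and the explicit cancellation justifying $\alpha=\varepsilon$, only make explicit what the paper leaves implicit.
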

\begin{corollary}
A linear biquasigroup satisfying \eqref{e8} is medial.
\end{corollary}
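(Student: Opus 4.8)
The plan is to read off the explicit normal forms furnished by Theorem~\ref{T28} and then recognize each of the two underlying quasigroups as linear over the same commutative group with \emph{commuting} automorphisms, so that the Toyoda characterization of medial quasigroups applies directly. By Theorem~\ref{T28}, a linear biquasigroup satisfying \eqref{e8} has the form $x\circ y=\varphi x-y+a$ and $x*y=x-y+b$ over a commutative group $(Q,+)$, where $\varphi$ is an automorphism of $(Q,+)$. Both operations are thus linear over the \emph{same} commutative group, which already secures one of the two requirements in the definition of a medial biquasigroup.

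First I would rewrite each operation in the canonical linear form $u\cdot v=\sigma u+\tau v+c$ to exhibit its pair of automorphisms. For $(Q,\circ)$ we have $x\circ y=\varphi x+(-\varepsilon)y+a$, so the relevant automorphisms are $\varphi$ and $-\varepsilon$ (the inversion map, which is an automorphism precisely because $(Q,+)$ is commutative). For $(Q,*)$ we have $x*y=\varepsilon x+(-\varepsilon)y+b$, with automorphisms $\varepsilon$ and $-\varepsilon$.

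Next I would check that in each case the two automorphisms commute. For $(Q,*)$ this is immediate, since $\varepsilon$ commutes with everything. For $(Q,\circ)$ it follows because any automorphism preserves inverses, i.e. $\varphi(-x)=-\varphi(x)$ for all $x$, which is exactly the statement $\varphi(-\varepsilon)=(-\varepsilon)\varphi$. By the Toyoda Theorem, commuting automorphisms over a commutative group force each of $(Q,\circ)$ and $(Q,*)$ to be medial.

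Finally, since both quasigroups are medial and are linear over the same commutative group $(Q,+)$, the biquasigroup $(Q,\circ,*)$ is medial by definition, which completes the argument. I do not anticipate a genuine obstacle: the only substantive point is the commutation of $\varphi$ with inversion, and that is automatic for an abelian group. One could instead bypass Toyoda and verify the medial identity $(u\cdot v)\cdot(s\cdot t)=(u\cdot s)\cdot(v\cdot t)$ for each operation by direct substitution, but invoking the Toyoda characterization already cited in the paper is cleaner.
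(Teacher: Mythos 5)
Your proposal is correct and is exactly the argument the paper intends: the corollary is stated without proof as an immediate consequence of Theorem~\ref{T28}, and your route --- read off the forms $x\circ y=\varphi x-y+a$, $x*y=x-y+b$ over the commutative group, note that $\varphi$ commutes with $-\varepsilon$, and invoke the Toyoda characterization together with the paper's definition of a medial biquasigroup --- is the intended justification. No gaps.
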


\begin{corollary}
A medial biquasigroup induced by the group $\mathbb{Z}_n$ satisfies $\eqref{e8}$ if and only if 
$x\circ y=[ax-y+c]_n$ and $x*y=[x-y+d]_n$ for some $a,c,d\in\mathbb{Z}_n$ such that $(a,n)=1$.
\end{corollary}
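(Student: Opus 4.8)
The plan is to obtain this corollary as a direct specialization of Theorem \ref{T28} to the group $(Q,+)=\mathbb{Z}_n$. A medial biquasigroup induced by $\mathbb{Z}_n$ is by definition linear over the commutative group $\mathbb{Z}_n$, so the commutativity hypothesis appearing in Theorem \ref{T28} is automatically satisfied here. Conversely, since the automorphism group $\mathrm{Aut}(\mathbb{Z}_n)$ is abelian, every biquasigroup linear over $\mathbb{Z}_n$ has commuting automorphisms and is therefore medial. Hence, for the group $\mathbb{Z}_n$, the notions ``medial biquasigroup induced by $\mathbb{Z}_n$'' and ``biquasigroup linear over $\mathbb{Z}_n$'' coincide, and only the bookkeeping of automorphisms and constants remains to be done.

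First I would recall the description of $\mathrm{Aut}(\mathbb{Z}_n)$. An endomorphism of $\mathbb{Z}_n$ is determined by the image of the generator $1$; writing $\varphi(1)=a$ gives $\varphi(x)=[ax]_n$, and such a map is bijective precisely when $a$ generates $\mathbb{Z}_n$, that is, when $(a,n)=1$. Thus the automorphisms of $\mathbb{Z}_n$ are exactly the maps $x\mapsto [ax]_n$ with $(a,n)=1$, and this is the origin of the condition $(a,n)=1$ in the statement. I would also observe that the second operation $x*y=x-y+d$ carries the fixed automorphisms $\varepsilon$ and $-\varepsilon$, i.e. multiplication by $1$ and by $-1$, both of which are units modulo $n$ for every $n$, so this operation imposes no further arithmetic constraint.

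With these two observations in hand I would invoke Theorem \ref{T28}. Its ``only if'' direction gives $x\circ y=\varphi x-y+c$ and $x*y=x-y+d$ for some automorphism $\varphi$ of $\mathbb{Z}_n$ and constants $c,d\in\mathbb{Z}_n$; substituting $\varphi(x)=[ax]_n$ with $(a,n)=1$ yields exactly $x\circ y=[ax-y+c]_n$ and $x*y=[x-y+d]_n$. For the ``if'' direction, any operations of this form are linear over $\mathbb{Z}_n$ (the map $x\mapsto[ax]_n$ being an automorphism precisely because $(a,n)=1$), hence medial, so the ``if'' part of Theorem \ref{T28} shows they satisfy \eqref{e8}. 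There is no genuine obstacle here, as the content is entirely inherited from Theorem \ref{T28}; the only point requiring care is the identification of ``medial'' with ``linear over $\mathbb{Z}_n$'', which rests on the fact that $\mathrm{Aut}(\mathbb{Z}_n)$ is abelian, making the commuting-automorphisms requirement in the definition of mediality vacuous. Once this is noted, the corollary is merely the translation of the abstract automorphism $\varphi$ into multiplication by a unit $a$.
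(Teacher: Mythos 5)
Your proposal is correct and follows the route the paper clearly intends (the corollary is stated without proof, immediately after Theorem \ref{T28} and the remark that a linear biquasigroup satisfying \eqref{e8} is medial): specialize Theorem \ref{T28} to $(Q,+)=\mathbb{Z}_n$, identify the automorphisms of $\mathbb{Z}_n$ with multiplication by units $a$ with $(a,n)=1$, and note that over $\mathbb{Z}_n$ the notions ``medial'' and ``linear'' coincide since $\mathrm{Aut}(\mathbb{Z}_n)$ is abelian and $\mathbb{Z}_n$ is commutative. Your explicit justification of that last identification via the Toyoda theorem is a worthwhile detail the paper leaves implicit, but it is not a different method.
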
            

\bigskip\noindent
{\bf 9.} Finally, let us consider a biquasigroup $(Q,\circ,*)$ satisfying the identity \eqref{e9}, i.e.
$$
(x\circ z)* (y* z)=x*y.
$$     

\begin{theorem}\label{T29} In a biquasigroup $(Q,\circ,*)$ satisfying the identity \eqref{e9} the quasigroups $(Q,\circ)$ and $(Q,*)$ have no more than one idempotent. If such idempotent exists then it is a common right neutral element of these quasigroups.
\end{theorem}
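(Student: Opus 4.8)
The plan is to run the entire argument on the two cancellation laws of the quasigroups $(Q,\circ)$ and $(Q,*)$, feeding \eqref{e9} well-chosen specializations of $x,y,z$ that collapse one factor on the left-hand side to an idempotent. Nothing beyond the quasigroup axioms and \eqref{e9} should be needed.

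First I would start from an idempotent $e$ of $(Q,*)$, that is $e*e=e$, and squeeze out the neutral-element properties. Putting $y=z=e$ in \eqref{e9} gives $(x\circ e)*(e*e)=(x\circ e)*e=x*e$, and right cancellation in $(Q,*)$ yields $x\circ e=x$; hence $e$ is a right neutral element of $(Q,\circ)$. Feeding this back, I would substitute $z=e$ in \eqref{e9} to obtain $(x\circ e)*(y*e)=x*(y*e)=x*y$, and left cancellation gives $y*e=y$, so $e$ is a right neutral element of $(Q,*)$ as well. Uniqueness in $(Q,*)$ is then immediate: a second idempotent $\bar e$ satisfies $\bar e*\bar e=\bar e=\bar e*e$, and left cancellation forces $\bar e=e$.

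Next I would transfer the conclusion to $(Q,\circ)$ by means of the diagonal substitution. Taking $x=y=z=a$ with $a\circ a=a$ in \eqref{e9} gives $(a\circ a)*(a*a)=a*(a*a)=a*a$, and left cancellation in $(Q,*)$ shows $a*a=a$; thus every idempotent of $(Q,\circ)$ is an idempotent of $(Q,*)$. Combined with the uniqueness already established for $(Q,*)$, this yields at most one idempotent in $(Q,\circ)$, and any such idempotent must coincide with the common right neutral element $e$. (The two existence statements are consistent, since a right neutral $e$ of $(Q,\circ)$ automatically satisfies $e\circ e=e$.)

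I do not expect a serious obstacle here; the proof is a short chain of cancellations. The only point requiring care is the order of the specializations: one must establish $x\circ e=x$ first, because only then does the substitution $z=e$ turn \eqref{e9} into a statement purely about $*$. The diagonal substitution $x=y=z=a$ is the one genuinely useful observation, as it lets idempotents of $(Q,\circ)$ be handled without invoking any extra structure.
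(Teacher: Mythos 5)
Your proof is correct and takes essentially the same route as the paper's: specialize \eqref{e9} (at $y=z=e$, at $z=e$, and on the diagonal $x=y=z=a$) and apply the cancellation laws of the two quasigroups. The only cosmetic difference is the order of the steps — the paper first shows the idempotents of $(Q,\circ)$ and $(Q,*)$ coincide and then derives right neutrality, while you derive neutrality from a $*$-idempotent first and then transfer $\circ$-idempotents via the diagonal substitution.
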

\begin{proof}
Assume $(Q,\circ)$ has an idempotent $a$. Then $a*a=(a\circ a)*(a*a)=a*(a*a)$ and so $a*a=a$. Analogously, for $a*a=a$ we have $a*a=(a\circ a)*(a*a)=(a\circ a)*a$, which implies $a\circ a=a$. So, $(Q,\circ)$ and $(Q,*)$ have the same idempotent. Then for each $x\in Q$ $x*a=(x\circ a)*(a*a)=(x\circ a)*a$, which implies $x=x\circ a$. Thus $a$ is a right neutral element of $(Q,\circ)$. On the other hand, $x\circ a=x$ gives $x*x=(x\circ a)*(x*a)=x*(x*a)$, and consequently $x=x*a$. Thus, $a$ is a right neutral element of $(Q,\circ)$ and $(Q,*)$.
 \end{proof}

\begin{corollary} 
If in a biquasigroup $(Q,\circ,*)$ satisfying $\eqref{e9}$ the quasigroup $(Q,*)$ is unipotent, then $(Q,\circ)=(Q,*)$ and $(Q,\circ)$ is a Ward quasigroup.
\end{corollary}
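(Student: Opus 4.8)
The plan is to first collapse the two operations into a single one, and then to recognize the surviving identity as Polonijo's characterization of Ward quasigroups (identity \eqref{e1}).

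Since $(Q,*)$ is unipotent, there is a fixed $w\in Q$ with $x*x=w$ for every $x\in Q$. The crucial move is to specialize \eqref{e9} by setting $y=x$, which yields
$$(x\circ z)*(x*z)=x*x=w.$$
I would then read this as an equation in the quasigroup $(Q,*)$ of the form $u*(x*z)=w$ with $u=x\circ z$. Applying unipotency to the element $x*z$ gives $(x*z)*(x*z)=w$ as well, so both $u=x\circ z$ and $u=x*z$ solve the same left-division equation $u*(x*z)=w$. Uniqueness of the left quotient in the quasigroup $(Q,*)$ then forces $x\circ z=x*z$; as $x$ and $z$ were arbitrary, this already establishes $(Q,\circ)=(Q,*)$.

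With the two operations identified, \eqref{e9} becomes $(x\circ z)\circ(y\circ z)=x\circ y$, which is exactly identity \eqref{e1}. By Polonijo's result recalled in the Introduction, a quasigroup satisfying \eqref{e1} is a Ward quasigroup, so $(Q,\circ)$ is a Ward quasigroup and the proof is finished.

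I expect no serious difficulty here; the only subtle point is the cancellation step, where one must apply unipotency of $(Q,*)$ to the element $x*z$ (rather than to $x$ or $z$ directly) so that the two sides $x\circ z$ and $x*z$ can be compared as solutions of one and the same left-division equation. Everything else is a single substitution followed by an appeal to the already-established characterization of Ward quasigroups.
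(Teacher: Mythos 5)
Your proof is correct, and it is slicker than the one in the paper. The paper first substitutes $y=z=x$ into \eqref{e9} to get $(x\circ x)*a=a=a*a$ (where $a$ is the unipotency constant) and cancels to conclude that $(Q,\circ)$ is unipotent with the same constant; it then invokes the preceding theorem (that the unique idempotent is a common right neutral element) to write $x\circ y=(x\circ y)*a=(x\circ y)*(y*y)=x*y$. You instead make the single substitution $y=x$, obtaining $(x\circ z)*(x*z)=w=(x*z)*(x*z)$, and cancel the right factor $x*z$ at once; this yields $x\circ z=x*z$ in one stroke, without passing through the unipotency of $(Q,\circ)$ or the right-neutral-element lemma, so your argument is self-contained where the paper's leans on Theorem 2.21. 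The final step --- identifying the collapsed identity with \eqref{e1} and citing Polonijo's characterization of Ward quasigroups --- is the same in both. The only quibble is terminological: the equation $u*(x*z)=w$ in the unknown $u$ is resolved by the \emph{right} division of the quasigroup $(Q,*)$ (you are cancelling a right factor), not the left one; this does not affect the validity of the argument, since either way a quasigroup guarantees a unique solution.
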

\begin{proof} 
Let $x*x=a$ for all $x\in Q$ and some $a\in Q$. Then $a*a=x*x=(x\circ x)*(x*x)=(x\circ x)*a$. Therefore, $x\circ x=a$, i.e. $(Q,\circ)$ is unipotent. Consequently, $a*(x*y)= (y\circ y)*(x*y)=y*x$, which implies $x\circ y=(x\circ y)*a=(x\circ y)*(y*y)=x*y$. Hence $(Q,\circ)=(Q,*)$ and \eqref{e9} coincides with \eqref{e1}. This means that $(Q,\circ)$ is a Ward quasigroup.
\end{proof}

\begin{theorem}\label{T29}
A biquasigroup $(Q,\circ,*)$ linear over a group $(Q,+)$ satisfying the identity \eqref{e9} is medial and can be presented in the form $x\circ y=x-\beta^2y-\beta b$ and $x*y=x+\beta y+b$, where $(Q,+)$ is a commutative group, $\beta\in {\rm Aut}(Q,+)$ and $b\in Q$. This biquasigroup has a right neutral element $e=-\varphi^{-1}b$. 
 
Conversely, if $(Q,+)$ is a commutative group, $\beta\in {\rm Aut}(Q,+)$, $b\in Q$, $x\circ y=x-\beta^2y-\beta b$ and $x*y=x+\beta y+b$, then the biquasigroup  $(Q,\circ,*)$ satisfies \eqref{e9}.
\end{theorem}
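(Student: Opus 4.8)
The plan is to imitate the derivations of Theorems~\ref{T28} and \ref{T24a}: insert the linear forms $x\circ y=\varphi x+a+\psi y$ and $x*y=\alpha x+b+\beta y$ into \eqref{e9} and peel off constraints by specializing the three variables, working in additive notation but \emph{without} presupposing commutativity of $(Q,+)$. The left side of \eqref{e9} becomes
$$\alpha\varphi x+\alpha a+\alpha\psi z+b+\beta\alpha y+\beta b+\beta^2 z,$$
and the right side is $\alpha x+b+\beta y$. Comparing the coefficients of $x$ forces $\alpha\varphi=\alpha$, hence $\varphi=\varepsilon$, and the substitution $x=y=z=0$ yields the basic relation $\alpha a+b+\beta b=b$, equivalently $\alpha a+b=b-\beta b$.

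First I would cancel the leading $\alpha x$ and specialize. Putting $z=0$ and using $\alpha a+b=b-\beta b$ collapses the resulting $y$-equation to $-\beta b+\beta\alpha y+\beta b=\beta y$; since $\beta$ is an automorphism this reads $\beta\alpha y=\beta(b+y-b)$, so $\alpha$ is the inner automorphism $\alpha y=b+y-b$. Next, setting $x=y=0$ and substituting the conjugation formulas $\alpha a=b+a-b$ and $\alpha\psi z=b+\psi z-b$ reduces the $z$-equation to $a+\psi z+\beta b+\beta^2 z=0$. Evaluating at $z=0$ gives $a=-\beta b$, and the remaining terms give $\psi z=\beta b-\beta^2 z-\beta b$, i.e. $\psi$ is $-\beta^2$ conjugated by $\beta b$.

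The decisive step is to feed these three relations back into the full ($\alpha x$-cancelled) identity $\alpha a+\alpha\psi z+b+\beta\alpha y+\beta b+\beta^2 z=b+\beta y$. Replacing $\alpha a$, $\alpha\psi z$, $\beta\alpha y$ by their conjugation expressions and then substituting $\psi z=\beta b-\beta^2 z-\beta b$, every conjugating factor telescopes and the whole identity collapses to
$$-\beta^2 z+\beta y+\beta^2 z=\beta y\qquad\text{for all }y,z\in Q.$$
Because $\beta$ is surjective, $\beta y$ and $\beta^2 z$ run over all of $Q$, so this asserts that any two elements commute; hence $(Q,+)$ is commutative. I expect this telescoping to be the crux: the bookkeeping must be kept exact in the non-commutative group, since the commutativity that trivializes everything is precisely what is being extracted. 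Once $(Q,+)$ is abelian the inner twists vanish, giving $\alpha=\varepsilon$, $\psi=-\beta^2$ and $a=-\beta b$, whence $x\circ y=x-\beta^2 y-\beta b$ and $x*y=x+\beta y+b$.

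It then remains to record mediality and the neutral element. Both quasigroups are now linear over the same commutative group with commuting automorphism pairs $\{\varepsilon,-\beta^2\}$ and $\{\varepsilon,\beta\}$, so by Toyoda's Theorem each is medial and therefore so is the biquasigroup; solving $x\circ e=x=x*e$ gives the common right neutral element $e=-\beta^{-1}b$. The converse is the promised direct check: expanding $(x\circ z)*(y*z)=(x-\beta^2 z-\beta b)+\beta(y+\beta z+b)+b$ and using commutativity, the terms $\pm\beta^2 z$ and $\pm\beta b$ cancel to leave $x+\beta y+b=x*y$, so \eqref{e9} holds.
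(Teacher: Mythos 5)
Your proof is correct and follows essentially the same route as the paper's: substitute the linear forms, use $y=z=0$ and $z=0$ to get $\varphi=\varepsilon$, $\alpha a+b+\beta b=b$ and $\beta\alpha y=\beta b+\beta y-\beta b$, and then extract commutativity of $(Q,+)$ from the relation $\beta y+\beta^2 z=\beta^2 z+\beta y$ (the paper reaches this by comparing its equations \eqref{e91} and \eqref{e92}, you by telescoping the conjugation formulas; the difference is only in bookkeeping order). Your value $e=-\beta^{-1}b$ for the right neutral element is the correct one, and the statement's $e=-\varphi^{-1}b$ appears to be a typo, since $\varphi=\varepsilon$ would give $e=-b$, which does not satisfy $x*e=x$.
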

\begin{proof}
If a biquasigroup $(Q,\circ,*)$ linear over a group $(Q,+)$ satisfies the identity \eqref{e9}, then 
$\alpha a+b+\beta b=b$ and $\varphi=\varepsilon$. So, \eqref{e9} can be reduced to 
\begin{equation}\label{e91}
\alpha a+\alpha\psi z+b+\beta\alpha y+\beta b+\beta^2 z=b+\beta y,
\end{equation} 
which for $z=0$ gives $\alpha a+b+\beta\alpha y+\beta b=b+\beta y=\alpha a+b+\beta b+\beta y$. Since $\alpha a=b=b-\beta b$, the last implies $\beta\alpha y=\beta b+\beta y-\beta b$.
This together with \eqref{e91} (for $y=v$) gives
\begin{equation}\label{e92}
\alpha a+\alpha\psi z+b+\beta b+\beta v+\beta^2 z=b+\beta v.
\end{equation}
 Now adding $\beta v$ on the right side to \eqref{e91} and putting $y=0$ we get
$$
\alpha a+\alpha\psi z+b+\beta b+\beta^2 z+\beta v=b+\beta v.
$$
Comparing this identity with \eqref{e92} we obtain $\beta v+\beta^2z=\beta^2z+\beta v$ for all $v,z\in Q$.
This shows that $(Q,+)$ is a commutative group.  Consequently, $\beta\alpha y=\beta y$, so $\alpha=\varepsilon$. This by $\alpha a+b+\beta b=b$ gives $a=-\beta b$. Again putting $y=0$ in \eqref{e91} and using the above facts we obtain $\psi=-\beta^2$. Therefore, $x\circ y=x-\beta^2y-\beta b$ and $x*y=x+\beta y+b$.

The proof of the converse part of the Theorem follows from a direct calculation and is omitted.
\end{proof}
\begin{proposition}
A medial biquasigroup $(\mathbb{Z}_n,\circ,*)$ satisfies the identity $\eqref{e9}$ if and only if $x\circ y=[x-a^2y-ab]_n$, $x*y=[x+ay+b]_n$, where $a,b\in\mathbb{Z}_n$ are fixed and $(a,n)=1$.
\end{proposition}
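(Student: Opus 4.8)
The plan is to obtain this proposition as a direct specialisation of the preceding linear characterisation, Theorem~\ref{T29}. The key observation is that a medial biquasigroup $(\mathbb{Z}_n,\circ,*)$ is, by the definition of a medial biquasigroup adopted earlier together with the Toyoda Theorem, linear over the commutative group $(\mathbb{Z}_n,+)$. Hence Theorem~\ref{T29} applies verbatim, and the whole task reduces to rewriting its abstract automorphism $\beta$ in explicit arithmetic terms.

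For the ``only if'' direction I would apply Theorem~\ref{T29} to conclude that $x\circ y=x-\beta^2y-\beta b$ and $x*y=x+\beta y+b$ for some $\beta\in\mathrm{Aut}(\mathbb{Z}_n,+)$ and $b\in\mathbb{Z}_n$. I would then invoke the standard description of the automorphism group of a cyclic group: every automorphism of $(\mathbb{Z}_n,+)$ has the form $\beta x=[ax]_n$ for a unique residue $a$ with $(a,n)=1$. Substituting this, together with $\beta^2 x=[a^2x]_n$ and $\beta b=[ab]_n$, into the two formulas yields exactly $x\circ y=[x-a^2y-ab]_n$ and $x*y=[x+ay+b]_n$. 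For the converse, since $(a,n)=1$ guarantees that $x\mapsto[ax]_n$ is an automorphism of $(\mathbb{Z}_n,+)$, the operations above are precisely those furnished by Theorem~\ref{T29} with $\beta x=[ax]_n$; thus they are linear over $(\mathbb{Z}_n,+)$, so the biquasigroup is medial, and the converse half of Theorem~\ref{T29} guarantees that \eqref{e9} holds.

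I do not expect a substantial obstacle, as the argument is a mechanical transfer of Theorem~\ref{T29}. The only point deserving care is the coprimality condition $(a,n)=1$: it is exactly the requirement that $\beta$ be a bijection of $\mathbb{Z}_n$ (equivalently, that $\circ$ and $*$ be genuine quasigroup operations), and I would note in passing that $(a,n)=1$ automatically forces $(a^2,n)=1$, so that no separate hypothesis on the coefficient $a^2$ of $y$ in $\circ$ is needed.
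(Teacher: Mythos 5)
Your proposal is correct and is exactly the argument the paper intends: the proposition is stated without proof as an immediate specialisation of Theorem~\ref{T29}, obtained by writing the automorphism $\beta$ of $(\mathbb{Z}_n,+)$ as multiplication by a unit $a$ with $(a,n)=1$. Your added remarks on why $(a,n)=1$ is the right condition and why no separate condition on $a^2$ is needed are accurate.
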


\begin{example}\rm
Let $n=a^2+1>4$. Then $(\mathbb{Z}_n,\circ,*)$, where $x\circ y=[x+y]_n$ and $x*y=[x+ay]_n$ is an example of a biquasigroup satisfying \eqref{e9}.
\end{example}

\medskip\noindent
{\bf 10.} Many authors study linear quasigroups of the second type, namely quasigroups $(Q,\cdot)$ where, in the definition of the operation, the constant element is not placed in the middle of the formula but at its end, i.e. $x\cdot y=\varphi x+\psi y+a$.

Biquasigroups of this type satisfying the identities $(2)-(9)$ coincide with the quasigroups of the previous type. Namely, if a biquasigroup 
$\widehat{Q}=(Q,\circ,*)$ with the operations $x\circ y=\varphi x+\psi y+a$ and $x*y=\alpha x+\beta y+b$, where $\alpha,\beta, \varphi,\psi$ are automorphisms of a group $(Q,+)$,
satisfies $(2)$ then $\alpha a+\beta a=0$ and $\varphi=\varepsilon$. Thus $\alpha\psi z+\alpha a+\beta y+\beta\psi z+\beta a=\beta y$. This for $y=0$ and $\psi z=v$ gives $\alpha v=-\beta a-\beta v-\alpha a$. Since $\alpha$ and $\beta$ are automorphisms of $(Q,+)$ the last expression for $v=u+w$ implies $\beta(u+w)=\beta w+\beta u$. Thus $\beta u+\beta w=\beta u+\beta w$ for all $u,w\in Q$. Hence $(Q,+)$ is a commutative group. Such biquasigroups are described in subsection {\bf 2}. 

If a biquasigroup $(Q,\circ,\circ)$ with $x\circ y=\varphi x+\psi y +a$ satisfies \eqref{e3}, then $\varphi a+\psi a=0$, $\varphi=\varepsilon$ and $\psi z+a+\psi y+\psi^2z+\psi a=\psi y$, which for $y=a$ gives $\psi=-\varepsilon$. This shows that $(Q,+)$ is a commutative group and $x\circ y=x-y+a$. Also in the case when $(Q,\circ)$ with $x\circ y=\varphi x+a+\psi y$ satisfies \eqref{e1}, the group $(Q,+)$ must be commutative and $x\circ y=x-y+a$. This means that these two cases coincide.

If a biquasigroup $\widehat{Q}$ satisfies \eqref{e4}, then $\varphi a+\psi a+a=b$ and $\alpha=\varphi^2$. Because by Theorem \ref{T24} we have $q=\varphi 0+\psi 0+a=\alpha 0+\beta 0+b$, must be $q=a=b$. Consequently, $a=\varphi x+\psi x+a$. This implies $\varphi=-\psi$, which together with $(x\circ 0)\circ (x\circ 0)=a$ implies $\varphi^2x+\varphi a-\varphi^2 x-\varphi a=0$. Hence $\varphi x+a=a+\varphi x$ for all $x\in Q$. So, $a$ is in the center of $(Q,+)$. Therefore this case is reduced to the case described in subsection {\bf 4}. 

If a biquasigroup $\widehat{Q}$ satisfies \eqref{e5}, then  by Theorem \ref{T25} the quasigroup $(Q,*)$ has a right neutral element $e$. Thus $x=x*e=\alpha x+\beta e+b$ for all $x\in Q$. In particular $0=0*e=\beta e+b$. Consequently, $x=x*e=\alpha x$ and $x*y=x+\beta y+b$. Applying this formula to \eqref{e5} we can see that $\varphi=\varepsilon$ and $\varphi b=-a$. Therefore the identity \eqref{e5} can be written in the form
$$
\psi z+a+\psi y+\psi\beta z=\psi y+a.
$$
This for $z=0$ implies $a+\psi y=\psi y+a$. Hence $a$ is in the center of $(Q,+)$. Also $b$ is in the center of $(Q,+)$ because $\varphi b=-a$. Thus this case reduces to the case from subsection {\bf 5}.

By Corollary \ref{C-uni} any quasigroup satisfying \eqref{e6} is unipotent. Thus if $\widehat{Q}$ satisfies \eqref{e6}, then 
$\alpha 0+\beta 0+b=b$ implies $b=x*x=\alpha x+\beta x+b$, i.e. $\beta x=-\alpha x$  for all $x\in Q$. From \eqref{e6} it follows $\alpha=\varepsilon$. Thus $\beta x=-x$. Since $\beta$  is an automorphism of $(Q,+)$, $(Q,+)$ is commutative. Hence this case reduces to subsection {\bf 6}.

If a biquasigroup $\widehat{Q}$ satisfies \eqref{e7}, then $\varphi a+\psi b+a=b$ which together with \eqref{e7} fort $x=y=0$ implies
$$
\varphi\psi z+\varphi a+\psi\beta z+\psi b+a=b=\varphi a+\psi b+a.
$$
Thus $\varphi\psi z=\varphi a-\psi\beta z-\varphi a$. Since $\varphi\psi$ and $\psi\beta$ are automorphisms of $(Q,+)$ the last for $z=u+v$ gives 

\medskip
$\varphi\psi(u+v)=\varphi a-\psi\beta (u+v)-\varphi a$.

\medskip\noindent
On the other side,

\medskip
$\varphi\psi u+\varphi\psi v=\varphi a-\psi\beta u-\varphi a+\varphi a-\psi\beta v-\varphi a=\varphi a-\psi\beta (v+u)-\varphi a$.

\medskip\noindent
Comparing these two expression we obtain
$\psi\beta(u+v)=\psi\beta(v+u)$. Hence $(Q,+)$ is a commutative group and this case reduces to {\bf 7}.

If a biquasigroup $\widehat{Q}$ satisfies \eqref{e8}, then $\alpha a+\beta b+b=a$ and $\alpha=\varepsilon$. This together with \eqref{e8} for $x=y=0$ implies $\psi z+a+\beta^2z=a$. Hence $\beta^2z=-a-\psi z+a$. From this for $z=u+v$, in a similar way as in the previous case, we obtain $\psi(u+v)=\psi(v+u)$. Therefore $(Q,+)$ is a commutative group and this case reduces to {\bf 8}.

The case when $\widehat{Q}$ satisfies \eqref{e9} reduces to {\bf 9}. Indeed, in this case $\alpha a+\beta b=0$, which together \eqref{e9} for $x=y=0$ shows that $\beta^2z=-\alpha a-\alpha\psi z-\beta b$. From this we compute $\alpha\psi(u+v)=\alpha\psi(v+u)$. Hence $(Q,+)$ is commutative.

\small

\footnotesize{\rightline{Received \ February 03, 2021}
\noindent
W.A. Dudek \\
 Faculty of Pure and Applied Mathematics,
 Wroclaw University of Science and Technology\\
 50-370 Wroclaw,  Poland \\
 Email: wieslaw.dudek@pwr.edu.pl\\[4pt]
R.A.R. Monzo\\
Flat 10, Albert Mansions, Crouch Hill, London N8 9RE, United Kingdom\\
E-mail: bobmonzo@talktalk.net}

\end{document}